 \def\ps@pprintTitle{%
 	\let\@oddhead\@empty
 	\let\@evenhead\@empty
 	\def\@oddfoot{\footnotesize\itshape
 		{} \hfill\today}%
 	\let\@evenfoot\@oddfoot
 }
\newtheorem{theor}{Theorem}
\newtheorem*{theor*}{Theorem}
\newtheorem{prop}[theor]{Proposition}
\newtheorem{lemma}[theor]{Lemma}
\newtheorem{cor}[theor]{Corollary}
\newtheorem*{cor*}{Corollary}
\theoremstyle{definition}               
\newtheorem{defin}[theor]{Definition}
\newtheorem{ex}{Example}
\newtheorem{exs}[ex]{Examples}
\newtheorem{rem}{Remark}
\DeclareMathOperator{\Aut}{Aut}
\DeclareMathOperator{\End}{End}
\DeclareMathOperator{\id}{id}
\DeclareMathOperator{\im}{Im}
\DeclareMathOperator{\E}{E}
\DeclareMathOperator{\Z}{Z}
\begin{document}

\begin{frontmatter}
	\title{Idempotent set-theoretical solutions of the pentagon equation\tnoteref{mytitlenote}}
	\tnotetext[mytitlenote]{This work was partially supported by the Dipartimento di Matematica e Fisica ``Ennio De Giorgi'' - Università del Salento. The author is member of GNSAGA (INdAM) and of the non-profit association ADV-AGTA.}
		\author[unile]{Marzia~MAZZOTTA}
	\ead{marzia.mazzotta@unisalento.it}
	\address[unile]{Dipartimento di Matematica e Fisica ``Ennio De Giorgi''
		\\
		Universit\`{a} del Salento\\
		Via Provinciale Lecce-Arnesano \\
		73100 Lecce (Italy)\\}

\begin{abstract} 
A \emph{set-theoretical solution of the pentagon equation} on a non-empty set $X$ is a function $s:X\times X\to X\times X$ satisfying the relation $s_{23}\, s_{13}\, s_{12}=s_{12}\, s_{23}$, with $s_{12}=s\times  \,\id_X$,  $s_{23}=\id_X \times \, s$ and  $s_{13}=(\id_X\times \, \tau)s_{12}(\id_X\times \,\tau)$, where $\tau:X\times X\to X\times X$ is the flip map given by $\tau(x,y)=(y,x)$, for all $x,y\in X$. Writing a solution as $s(x,y)=(xy ,\theta_x(y))$, where $\theta_x: X \to X$ is a map, for every $x\in X$, one has that $X$ is a semigroup.\\ 
In this paper, we study idempotent solutions, i.e., $s^2=s$, by showing that the idempotents of $X$ have a crucial role in such an investigation. In particular, we describe all such solutions on monoids having central idempotents.
Moreover, we focus on idempotent solutions defined on monoids for which the map $\theta_1$ is a monoid homomorphism. 
\end{abstract}
\begin{keyword}
  pentagon equation \sep set-theoretical solution \sep semigroup
\MSC[2022] 16T25\sep 81R50\sep 20M99
\end{keyword}

\end{frontmatter}

\section*{Introduction}

	 If $V$ is a vector space over a field $F$, a linear map $S: V \otimes V \to V \otimes V$ is said to be a \emph{solution of the pentagon equation} on $V$ if it satisfies the relation
	\begin{align}\label{vec_equa}
	S_{12}S_{13}S_{23}=S_{23}S_{12},
	\end{align}
	where 
		$S_{12}=S\otimes \id_V$,\,$S_{23}=\id_V\otimes \, S$,\,$S_{13}=(\id_V\otimes\, \Sigma)\,S_{12}\;(\id_V\otimes \, \Sigma)$,
	with $\Sigma$ the flip operator on $V\otimes V$, i.e., $\Sigma(u\otimes v)=v\otimes u$, for all $u,v\in V$.
The pentagon equation classically originates from the field of Mathematical Physics, but it has several applications and appears in different areas of mathematics, also with different terminologies (see, for instance, \cite{Za92, BaSk93, Wo96, BaSk98, Mi98, JiLi05, Kawa10, Ka11}). 
To know more about contexts in which the pentagon equation appears, we refer to the introduction of the paper by Dimakis and M\"{u}ller-Hoissen \cite{DiMu15}  along with the references therein.  \\
	In 1998, Kashaev and Sergeev \cite{KaSe98} explicitly first highlighted an existing link between solutions on a vector space viewed as the space $F^X$ of all functions from a finite set $X$ to $F$ and maps from $X \times X$ into itself satisfying a certain relation. Into the specific, to any map $s:X\times X\to X\times X$ one can associate a linear operator $S:F^{X \times X} \to F^{X \times X}$ defined as $
	S(f)(x,y)= f(s(x,y))$, for all $x, y \in X$.
	If the map $s$ satisfies the ``reversed" pentagon relation 
	\begin{equation}\label{set_equa}
	s_{23}s_{13}s_{12}=s_{12}s_{23},
	\end{equation}
	where
		$s_{12}=s\times \id_X$,  $s_{23}=\id_X\times s$, $s_{13}=(\id_X\times \tau)\,s_{12}\,(\id_X\times \tau)$, with $\tau(x,y)=(y,x)$, for all $x,y\in X$,
	then the linear map $S$ is a solution of the pentagon equation on $F^X$.  We call the map $s$ as above \emph{set-theoretical solution of the pentagon equation}, or briefly \emph{solution}, on $X$. In the pioneering paper \cite{KaSe98},  one can also find the first systematic way to obtain solutions on closed under multiplications subsets of arbitrary groups.   However, set-theoretical-type solutions had already appeared before in the form of \emph{pentagonal transformations} on differential manifolds and on measured spaces in two papers: those by Zakrzewski \cite{Za92} 
and Baaj and Skandalis \cite{BaSk98}, respectively. One can transcribe these solutions in purely algebraic terms e note that they are the first instances of bijective solutions.  \\
	Attention only to set-theoretical solutions has been recently given in \cite{CaMaMi19}. Following the notation therein, writing a solution $s: X \times X \to X \times X$ as $s(x,y)=(xy, \theta_x(y))$, where $\theta_x$ is a map from $X$ into itself, for every $x \in X$,  one has that $X$ is a  semigroup and 
	\begin{align*}\label{p_one}
	\theta_x(y)  \theta_{x y}(z)&=\theta_x(y z ),\tag{P1}\\
	\label{p_two}\theta_{\theta_x(y)}\theta_{x\ y}&=\theta_y, \tag{P2}
	\end{align*}
	for all $x,y,z \in X$. In \cite[Theorem 15]{CaMaMi19}, a complete description of all solutions defined on a group is given. In this case, it holds that $\theta_x(y)=\theta_1(x)^{-1}\theta_1(xy)$, for all $x, y\in X$, where $1$ is the identity of the group $X$, and it is sufficient to study the set $\ker\theta_1=\{x \in X \, \mid \, \theta_1(x)=1\}$ that it a normal subgroup of $X$, even if, in general, $\theta_1$ is not a homomorphism.  	However, describing all solutions on arbitrary semigroups seems very difficult since there are many even in the case of small-order semigroups. For instance, it is sufficient to look \cite[Appendix B]{tesi}, where, as suggested by Rump, all the $202$ non-isomorphic solutions on semigroups of order $3$ have been computed. \\
A first step could be studying specific classes of solutions. In this regard,  a characterization of all involutive solutions, i.e., $s^2=\id_{X \times X}$, has been provided by Colazzo, Jespers, and Kubat in their recent paper \cite{CoJeKu20}. Moreover, in \cite{CaMaSt20}, Catino, Mazzotta, and Stefanelli described the class of solutions that satisfy both the pentagon equation and the quantum Yang-Baxter equation (see \cite{Dr92}). Indeed, as one can note, the pentagon equation is the quantum Yang-Baxter equation with the middle term missing on the right-hand side. An easy example of a map that satisfies both equations is given by $s(x, y) = (f(x), g(y))$, with $f, g$ idempotent maps from a set
$X$ into itself such that $fg = gf$. In this last paper, one can also find some methods to construct solutions of the pentagon equation such as, for instance, on the \emph{matched product} $S \bowtie T$ of two
semigroups $S$ and $T$ (see \cite[Definition 1]{CaMaSt20}), namely a new semigroup on the cartesian product of $S$ and $T$ including the \emph{classical Zappa product} of $S$ and $T$ (see \cite[Definition 1.1]{Ku83}).\\
Recently, idempotent left non-degenerate solutions of the Yang-Baxter equation have been completely described  (see \cite{Lebed17, StanVoj21, COJeKuAntVe22x} and the classifications therein). A similar study can also be done for the pentagon equation, motivated by the fact that already for semigroups of small order there are several of idempotent type, i.e., $s^2=s$. The question of studying this class of solutions arose explicitly during the talk held in the occasion of ``\emph{First UMI meeting of PhD students}" \cite{Maz100Umi}.

	In this paper, we deal with \emph{idempotent set-theoretical solutions of the pentagon equation}. One can easily check that a solution $s(x,y)=(xy, \theta_x(y))$ on a semigroup $X$ is idempotent if and only if
  \begin{align}\label{i_one}
     &x y  \theta_x(y)=x y,\tag{I1}\\
   \label{i_two}  &\theta_{x y}\theta_x(y)=\theta_x(y), \tag{I2}
 \end{align}
for all $x, y\in X$. We show that the idempotents of $X$ have a crucial role in finding them and, for that, we exhibit some useful properties of the maps $\theta_x$ involving the idempotents. In particular, we focus on solutions defined on monoids, by showing that, unlike solutions defined on groups,  it is not possible to find a way to write the maps $\theta_x$ by means of the map $\theta_1$, where $1$ is the identity of the monoid. We provide a description theorem for idempotent solutions on monoids $M$ having central idempotents, namely, $\E(M) \subseteq \Z(M)$, by illustrating that it is enough to construct specific idempotent maps $\theta_e$, for every $e \in \E(M)$.  Furthermore, in this situation, the map $\theta_1$ is an idempotent monoid homomorphism, and, for this reason, we deepen idempotent solutions on monoids  satisfying this additional property. In this case, all the maps $\theta_x$ have to be derived considering the kernel congruence of the function $\theta_1$, namely, the set $\ker\theta_1=\{(x,y) \in M \times M\, \mid \, \theta_1(x)=\theta_1(y)\}$ (see \cite{Rhod89} for more details). Indeed, $(\theta_x(y), y) \in \ker \theta_1$, for all $x, y \in M$, and $\theta_1(M)$ is a system of representatives of $M/ \ker \theta_1$. \\
Finally, we collect some properties of idempotent solutions on arbitrary semigroups, that could be useful in a future study of these maps in the more general case.

\bigskip

\section{Basics on solutions}
\noindent In this section, we give some basics on the solutions. Moreover, we introduce some classes of solutions and provide several examples.

\medskip

From now on, following the notation used in \cite[Proposition 8]{CaMaMi19}, given a semigroup $X$, we will briefly call \emph{solution} on $X$ any map $s: X \times X \to X \times X$ given by $s(x,y)=(x  y, \theta_x(y))$, where $\theta_x$ is a map from $X$ into itself, satisfying \eqref{p_one} and \eqref{p_two}.

\medskip

\begin{ex}\label{ex_militaru} (cf. \cite{Mi98}) Let $X$ be a set and $f, g: X \to X$ idempotent maps such that $fg=gf$. Set $x \cdot y=f(x)$, for all $x, y \in X$, one has that $(X, \cdot)$ is a semigroup and the map 
	$s(x,y)=\left(x\cdot y,\, g\left(y\right)\right)$ is a solution on $X$.
\end{ex}
Note that the previous example belongs to the class of P-QYBE solutions, namely the maps that are solutions both to the pentagon and the Yang-Baxter equations \cite{CaMaSt20}.

\medskip

\begin{defin}
    Let $(X, \cdot)$ and $(Y, \ast)$ be two semigroups and $s(x,y)=(x \cdot y, \theta_x(y))$ and $r(a, b)=(a  \ast b, \eta_a(b))$ two solutions on $X$ and $Y$, respectively. Then, $s$ and $r$ are \emph{isomorphic} if there exists a semigroup isomorphism $f: X \to Y$ such that
    \begin{align}\label{isomor}
        f\theta_x(y)=\eta_{f\left(x\right)}f(y),
    \end{align}
    for all $x, y \in X$, or, equivalently, $(f \times f)s=r(f \times f)$.    \end{defin}

A complete description  in the case of a group is given in \cite[Theorem 15]{CaMaMi19}. For the sake of completeness, we recall such a result below.
	\begin{theor}\label{teo_gruppi}
	Let $G$ be a group. Consider a normal subgroup $K$ of $G$ and a system of representatives $R$ of $G/K$ such that $1 \in R$.  If $\mu: G \to R$ is a map such that $\mu(x) \in Kx$, for every $x \in G$, then the map $
	s(x,y)=\left(xy, \mu\left(x\right)^{-1}\mu\left(xy\right)\right),
$
	for all $x,y \in G$, is a solution on $G$. \\
	Conversely, if $s$ is a solution on $G$, then the set
	$K=\{x \in G\, \mid\, \theta_1(x)=1\}$
	is a normal subgroup of $G$ for which $\im \theta_1$ is a system of representatives of $G/K$ that contains $1$, $\theta_1(x) \in Kx$, for every $x\in G$, and $
	s(x,y)=\left(xy, \ \theta_1\left(x\right)^{-1}\theta_1\left(xy\right)\right),$
	for all $x, y \in G$.
\end{theor}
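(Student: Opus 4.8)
The statement splits into a sufficiency part (building a solution from $K,R,\mu$) and a necessity part (recovering $K,R,\mu$ from a given solution), and in both the entire content is carried by the single map $\theta_1$. The plan is to dispatch sufficiency by a direct check, and then to concentrate on the converse, where the goal is to show that $\theta_1$ is completely controlled by the cosets of $K=\{x\in G\mid\theta_1(x)=1\}$.

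For sufficiency I set $\theta_x(y)=\mu(x)^{-1}\mu(xy)$ and isolate two facts about $\mu$. First, since $\mu(x)$ is the unique element of $R$ lying in $Kx$, the map $\mu$ is constant on each coset, so $\mu(a)=\mu(b)$ whenever $a,b$ lie in the same coset. Second, $\theta_x(y)\in Ky$, because $\mu(x)^{-1}\mu(xy)\in x^{-1}K\,K\,xy=x^{-1}Kxy=Ky$, where normality of $K$ is used. Granting these, \eqref{p_one} follows by telescoping, $\theta_x(y)\theta_{xy}(z)=\mu(x)^{-1}\mu(xy)\mu(xy)^{-1}\mu(xyz)=\mu(x)^{-1}\mu(xyz)=\theta_x(yz)$, and \eqref{p_two} reduces, after the same cancellation, to the two coset identities $\mu(\theta_x(y))=\mu(y)$ and $\mu(\mu(x)^{-1}\mu(xyz))=\mu(yz)$, both consequences of the two isolated facts.

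For the converse I start from a solution $s(x,y)=(xy,\theta_x(y))$. Reading \eqref{p_one} at $y=1$ gives $\theta_x(1)\theta_x(z)=\theta_x(z)$, hence $\theta_x(1)=1$ and in particular $\theta_1(1)=1$; reading \eqref{p_one} at $x=1$ gives $\theta_1(y)\theta_y(z)=\theta_1(yz)$, and inverting $\theta_1(y)$ in the group yields, after renaming, the reconstruction formula $\theta_x(y)=\theta_1(x)^{-1}\theta_1(xy)$. To pin down $\theta_1$ itself, I feed $\theta_x(1)=1$ into \eqref{p_two} at $y=1$, which collapses to $\theta_1\circ\theta_x=\theta_1$, that is $\theta_1(\theta_x(z))=\theta_1(z)$ for all $x,z$; combined with the reconstruction formula and the substitution $v=xz$, this becomes the single \emph{master identity} $\theta_1(\theta_1(x)^{-1}\theta_1(v))=\theta_1(x^{-1}v)$ for all $x,v\in G$.

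Everything else is extracted from this identity by specialization. Putting $x=1$ gives $\theta_1\circ\theta_1=\theta_1$, so $\im\theta_1$ is precisely the fixed-point set of $\theta_1$; feeding $v\in K$ gives $\theta_1(gv)=\theta_1(g)$, and taking $x=a^{-1}$ with $a\in K$ gives $\theta_1(av)=\theta_1(v)$, so $\theta_1$ is constant on left and on right cosets of $K$. From these, $K$ is closed under products and, via $\theta_1(\theta_1(x)^{-1})=\theta_1(x^{-1})$, under inverses, hence is a subgroup; and putting $v=\theta_1(x)$ gives $\theta_1(x^{-1}\theta_1(x))=1$, i.e.\ $\theta_1(x)\in xK$. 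I expect the main obstacle to be normality of $K$: the cleanest route is again the master identity, taking $x=a^{-1}g$ and $v=g$ for $a\in K$, so that the right-hand side is $\theta_1(g^{-1}ag)$ while the left-hand side is $\theta_1(\theta_1(g)^{-1}\theta_1(g))=1$ (using right-coset constancy $\theta_1(a^{-1}g)=\theta_1(g)$), forcing $g^{-1}ag\in K$. Once $K$ is normal the cosets are two-sided, so $\theta_1(x)\in xK=Kx$ shows $\im\theta_1$ meets every coset, while coset-constancy shows it meets each at most once; thus $\im\theta_1$ is a transversal containing $\theta_1(1)=1$, and the reconstruction formula gives $s(x,y)=(xy,\theta_1(x)^{-1}\theta_1(xy))$. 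The only genuinely delicate point is keeping the left- versus right-coset invariance straight and ordering the deductions correctly, since inverse-closure of $K$ must be established before the right-coset constancy invoked in the normality step.
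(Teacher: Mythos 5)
Your argument is correct, but note that the paper itself gives no proof of this statement: it is recalled verbatim from \cite[Theorem 15]{CaMaMi19} ``for the sake of completeness,'' so there is no in-paper proof to compare against. Judged on its own, your proposal is sound. The sufficiency check is the standard telescoping computation, and the two auxiliary facts you isolate (constancy of $\mu$ on cosets, and $\theta_x(y)\in Ky$ via normality of $K$) are exactly what makes \eqref{p_two} close up. For the converse, your organization around the single ``master identity'' $\theta_1\bigl(\theta_1(x)^{-1}\theta_1(v)\bigr)=\theta_1(x^{-1}v)$ — obtained by combining $\theta_1\theta_x=\theta_1$ (from \eqref{p_two} at $y=1$, using $\theta_x(1)=1$) with the reconstruction formula — is efficient: idempotence of $\theta_1$, left-coset constancy, inverse-closure of $K$, right-coset constancy, $\theta_1(x)\in xK$, and normality all drop out by specialization. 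I checked the one delicate point you flag yourself: inverse-closure of $K$ does follow from $\theta_1(\theta_1(x)^{-1})=\theta_1(x^{-1})$ at $x=a\in K$ (giving $\theta_1(a^{-1})=\theta_1(1)=1$) before you need $\theta_1(a^{-1}g)=\theta_1(g)$ in the normality step, so the deductions can indeed be ordered as you describe and there is no circularity. The argument is complete.
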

\noindent By \cref{teo_gruppi} and making explicit the condition \eqref{isomor}, it is easy to check that two solutions $s(x,y)=(x y, \theta_x(y))$ and $r(x,y)=\left(xy, \eta_x\left(y\right)\right)$ on the same group $G$ are isomorphic via $f \in \Aut(G)$ if and only if  $f\theta_1=\eta_1f$, i.e., $f$ sends the system of representatives $\theta_1(G)$ into the other one $\eta_1f(G)$.

\medskip

	However, describing all the solutions, up to isomorphisms, on arbitrary semigroups turns out to be very hard. Indeed, even in the case of semigroups of small order, there are a lot of solutions, as one can see in \cite[Appendix B]{tesi}. A first step could be studying specific classes of solutions. In this regard, one can find  in \cite[Theorem 5.5]{CoJeKu20} a complete description of all \emph{involutive} solutions.

	\begin{defin}
	    Let $X$ be a semigroup and $s(x,y)=(xy, \theta_x(y))$ a solution on $X$. We say that the map $s$ is
	    \begin{enumerate}
	        \item[-] \emph{non-degenerate} if $\theta_x$ is bijective, for every $x \in X$;
	        \item[-]\emph{involutive} if $s^2=\id_{X\times X}$;
	        \item[-] \emph{idempotent} if $s^2=s$.
	    \end{enumerate}
	\end{defin}

\medskip

\begin{exs}\hspace{1mm}\label{exs_PE}
\begin{enumerate}
    \item \cite[Examples 2-2.]{CaMaMi19}\, Let $X$ be a semigroup and $\gamma :X \to X$ a map.  Then, the map 
	$s(x,y)=\left(xy,\gamma\left(y\right)\right),$
	for all $x,y \in X$, is a solution if and only if $\gamma\in \End(X)$ and $\gamma^2=\gamma$. One can easily check that such a solution $s$ is non-degenerate if and only if $\gamma=\id_X$.  
	\vspace{1mm}
	\item As a particular case of $1.$, if $X$ is a semigroup and $e \in \E(X)$, where $\E(X)$ denotes the set of the idempotents of $X$,  the map
	$ s(x,y)=\left(xy,e\right),$
	for all $x,y \in X$, is an idempotent solution.
		\vspace{1mm}
	\item 
    Let $X$ be a semigroup belonging to the variety $\mathcal{S}:= [abc = bc]$ (see \cite[p. 370]{Mo03}).
   Then, the map  
    $s\left(x,y\right)= \left(xy, xy\right),$ for all $x,y \in X$, is an idempotent solution. 
	\vspace{1mm}
	\item Every Clifford semigroup $X$ gives rise to the idempotent solution $s$ given by
	$s(x,y)=\left(xy, y^{-1}y\right),$
	for all $x, y \in X$.
	Recall that a Clifford semigroup $X$ is a semigroup in which every $x\in X$ admits a unique $x^{-1}\in X$ such that $xx^{-1}x=x, x^{-1}xx^{-1}=x^{-1}$, and $xx^{-1}=x^{-1}x$ (see \cite[Exercise II.2.14]{Pe84}). \vspace{1mm}
	\item \cite[Appendix B]{tesi} Let $X=\{0,a,b\}$ and $S$ the null semigroup on $X$, i.e., $xy=0$, for all $x, y \in X$. Consider the maps $\theta_0=\id_S$ and $\theta_a=\theta_b$ such that $\theta_a(0)=0$, $\theta_a(a)=b$, and $\theta_a(b)=a$. Then, the map $s(x, y)=(0, \theta_x(y))$ is an idempotent and non-degenerate solution on $S$.
\end{enumerate}
\end{exs}

\medskip

Other classes of solutions that can be studied are the commutative and the cocommutative ones (see \cite[Definition 6]{CaMaMi19}). These kinds of solutions are in analogy to the commutative and the cocommutative multiplicative unitary operators, i.e., solutions of \eqref{vec_equa} defined on Hilbert spaces (see \cite[Definition 2.1]{BaSk93}).

\begin{defin}
	A solution $s: X \times X \to X \times X$ is said to be 
	\begin{enumerate}
	    \item[-] \emph{commutative} if $s_{12}s_{13}=s_{13}s_{12}$; \item[-] \emph{cocommutative} if $s_{13}s_{23}=s_{23}s_{13}$.
	\end{enumerate}
\end{defin}

\noindent If $X$ is a  semigroup and $s(x,y)=(xy, \theta_x(y))$ a solution on $X$, it is a routine computation to check that the map $s$ is commutative if and only if 
\begin{align*}
  xzy=xyz \quad \text{(C1)} \qquad \qquad\qquad \theta_x=\theta_{xy} \quad\text{(C2)}
\end{align*}
for all $x,y,z\in X$. 
Instead, $s$ is cocommutative if and only if 
\begin{align*}
    x\theta_y(z)=xz \quad \text{(CC1)} \qquad \qquad \theta_x\theta_{y}=\theta_y\theta_x \quad\text{(CC2)}
\end{align*}
for all $x,y,z \in X$.\\
There exist solutions that are both commutative and cocommutative, such as the maps in \cref{ex_militaru}.  Moreover, according to \cite[Corollary 3.4]{CoJeKu20}, if $s$ is an involutive solution, then $s$ is both commutative and cocommutative. 

\medskip

If $M$ is a monoid, it follows by (CC1) that the unique cocommutative solution on $M$ is given by  $s(x,y)=(xy,y)$, for  all $x,y \in M$. In the next result, we describe all the commutative solutions on a monoid.

\begin{prop}
Let $M$ be a monoid. Then, a solution $s(x,y)=(xy, \theta_x(y))$ on $M$ is commutative if and only if $M$ is a commutative monoid and $\theta_x=\gamma$, with $\gamma \in \End(M)$, $\gamma^2=\gamma$, for every $x \in M$.
\begin{proof}
By \cref{exs_PE}-1., $s(x,y)=\left(xy, \gamma\left(y\right)\right)$, is a commutative solution on $M$. Conversely, let us assume that $s(x,y)=(xy, \theta_x(y))$ is commutative. Then, by substituting $x=1$ in (C1), the monoid $M$ is commutative and, by (C2), $\theta_1=\theta_y$, for every $y \in M$. Thus, by \eqref{p_one} and \eqref{p_two} the claim follows.
\end{proof}
\end{prop}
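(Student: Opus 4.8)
The plan is to prove both implications directly from the two commutativity conditions, namely (C1) $xzy=xyz$ and (C2) $\theta_x=\theta_{xy}$, combined with the defining relations \eqref{p_one} and \eqref{p_two} of a solution. The key observation I would exploit is that the presence of an identity in $M$ lets me specialize these conditions by setting one variable equal to $1$.

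For the forward direction, I would first invoke \cref{exs_PE}-1., which guarantees that a map of the form $s(x,y)=(xy,\gamma(y))$ with $\gamma\in\End(M)$ and $\gamma^2=\gamma$ is indeed a solution; it then remains only to verify commutativity. Condition (C1) reads $xzy=xyz$, which is immediate since $M$ is commutative, while (C2) demands $\theta_x=\theta_{xy}$, holding trivially because every $\theta_x$ equals the single map $\gamma$. Hence such an $s$ is commutative.

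For the converse, I would suppose that $s(x,y)=(xy,\theta_x(y))$ is a commutative solution, so that (C1) and (C2) both hold. Setting $x=1$ in (C1) gives $zy=yz$ for all $y,z\in M$, so $M$ is commutative; setting $x=1$ in (C2) gives $\theta_1=\theta_y$ for every $y\in M$, so, writing $\gamma:=\theta_1$, all the maps $\theta_x$ collapse to the single map $\gamma$. Substituting $\theta_x=\gamma$ into \eqref{p_one} turns $\theta_x(y)\,\theta_{xy}(z)=\theta_x(yz)$ into $\gamma(y)\gamma(z)=\gamma(yz)$, whence $\gamma\in\End(M)$, and substituting into \eqref{p_two} turns $\theta_{\theta_x(y)}\theta_{xy}=\theta_y$ into $\gamma\gamma=\gamma$, i.e.\ $\gamma^2=\gamma$. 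This yields exactly the asserted form.

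Since each step is a straightforward substitution, I do not anticipate any genuine obstacle. The only point requiring care is the choice of the specialization $x=1$: it is precisely the identity of $M$ that strips (C1) down to plain commutativity and (C2) down to the statement that $\theta_x$ is independent of $x$, after which the conditions \eqref{p_one} and \eqref{p_two} force $\gamma$ to be an idempotent endomorphism with no further work.
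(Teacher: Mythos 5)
Your proposal is correct and follows essentially the same route as the paper: specialize (C1) and (C2) at $x=1$ to get commutativity of $M$ and $\theta_x=\theta_1=:\gamma$, then read off $\gamma\in\End(M)$ and $\gamma^2=\gamma$ from \eqref{p_one} and \eqref{p_two}, with the forward direction delegated to \cref{exs_PE}-1. You merely spell out the substitutions that the paper leaves implicit.
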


\bigskip

\section{Properties of the maps \texorpdfstring{$\theta_x$}{} involving the idempotents}
\noindent In this section, we provide some properties of the maps $\theta_x$ which involve the idempotents of arbitrary semigroups and that will be used in the next.

\medskip

Firstly, according to \cite[p. 69]{Ho95}, among the idempotents in any semigroup $X$, there is a natural partial order relation by the rule that
\begin{align*}
    \forall\, e,f \in \E(X) \quad e \leq f \Longleftrightarrow ef=fe=e.
\end{align*}
Thus, we can collect the following easy properties for the maps $\theta_e$ on $X$.
\begin{lemma}\label{lemma_idemp}
 Let $X$ be a semigroup, $e, f \in \E(X)$ such that $e \leq f$,  and $s(x,y)=(xy, \theta_x(y))$ a solution on $X$. Then, the following hold:
 \begin{enumerate}
     \item $\theta_e(f) \in \E(X)$,
     \item $\theta_e(e) \leq \theta_e(f)$,
     \item $\theta_f=\theta_{\theta_e(f)}\theta_e$.
 \end{enumerate}
\end{lemma}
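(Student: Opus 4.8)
The plan is to obtain all three items by feeding well-chosen triples into the defining identities \eqref{p_one} and \eqref{p_two}, exploiting throughout that $e$ and $f$ are idempotent and that $e \leq f$ supplies the relations $ef = fe = e$.

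For item (1), I would apply \eqref{p_one} to the triple $(x,y,z) = (e,f,f)$. The left-hand side becomes $\theta_e(f)\,\theta_{ef}(f) = \theta_e(f)\,\theta_e(f)$ after collapsing $ef = e$, while the right-hand side is $\theta_e(ff) = \theta_e(f)$ since $f^2 = f$. This gives $\theta_e(f)^2 = \theta_e(f)$, i.e. $\theta_e(f) \in \E(X)$. I place this item first, since it guarantees that $\theta_e(f)$ is an idempotent before it is used as a partner in the order relation of item (2).

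For item (2), I first note that \eqref{p_one} at $(e,e,e)$ yields $\theta_e(e)^2 = \theta_e(e)$, so $\theta_e(e)$ is also idempotent and the relation $\theta_e(e) \leq \theta_e(f)$ is meaningful. To verify it I must check the product in both orders: \eqref{p_one} at $(e,e,f)$ gives $\theta_e(e)\,\theta_e(f) = \theta_e(ef) = \theta_e(e)$, and \eqref{p_one} at $(e,f,e)$ gives $\theta_e(f)\,\theta_e(e) = \theta_e(fe) = \theta_e(e)$; the two computations use $ef = e$ and $fe = e$ respectively. Together these read $\theta_e(e)\theta_e(f) = \theta_e(f)\theta_e(e) = \theta_e(e)$, which is exactly $\theta_e(e) \leq \theta_e(f)$. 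For item (3), I would invoke \eqref{p_two} at the pair $(x,y) = (e,f)$, obtaining $\theta_{\theta_e(f)}\,\theta_{ef} = \theta_f$, and substituting $ef = e$ delivers $\theta_{\theta_e(f)}\,\theta_e = \theta_f$ immediately.

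Since the whole argument reduces to selecting the right substitutions, I do not expect a serious obstacle; the only point demanding care is item (2), where establishing the order relation requires computing the product in both orders and tracking which of $ef$ or $fe$ is being collapsed to $e$ in each direction.
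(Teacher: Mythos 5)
Your proof is correct: each of the three substitutions into \eqref{p_one} and \eqref{p_two} checks out, and the paper itself states this lemma as a collection of ``easy properties'' with the proof omitted, so your direct verification is exactly the intended routine argument. No issues.
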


\medskip

Now, following \cite[p. 22]{ClPre61}, given a semigroup $X$ and $e \in \E(X)$, then $e$ is a \emph{left identity} (or \emph{right identity}) if $ex=x$ (or $xe=x$), for every $x \in X$, and the sets
\begin{align*}
    eX&=\{x \in X \, \mid \, ex=x\}\qquad Xe=\{x \in X \, \mid \, xe=x\}
\end{align*}
are respectively the principal right and left ideals of $X$ generated by $e$. Moreover, we set $eXe=eX \cap Xe$. We can check the following properties.
\begin{lemma}\label{lemma_es_se}
Let $X$ be a semigroup, $e \in \E(X)$, and $s(x, y)=(xy, \theta_x(y))$ a solution on $X$. 
\begin{enumerate}
    \item If $x \in Xe$, then $\theta_x(e) \in \E(X)$,

 \item   if $x \in eX$, then
    $\theta_e(x) \in \theta_e(e)X$,
\end{enumerate}
\begin{proof}
Initially, assume that $x \in Xe$. Then, by \eqref{p_one}, $\theta_x(e)=\theta_x(e)\theta_{xe}(e)=\theta_x(e)\theta_x(e)$. \\
Now, assume that $x \in eX$. Thus, using \eqref{p_one}, we have that $\theta_e(x)=\theta_e(ex)=\theta_e(e)\theta_e(x)$. Hence, since by \cref{lemma_idemp}-$1.$ $\theta_e(e) \in \E(X)$, we get $\theta_e(x) \in \theta_e(e) X$.
\end{proof}
\end{lemma}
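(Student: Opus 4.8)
The plan is to read off both statements from single, carefully chosen specializations of the solution axiom \eqref{p_one}, which I recall reads $\theta_x(y)\,\theta_{xy}(z)=\theta_x(yz)$. The whole point will be to make the idempotent $e$ appear as a subscript of $\theta$, so that the hypotheses $xe=x$ or $ex=x$ let me simplify that subscript. For part 1, assuming $x\in Xe$ (that is, $xe=x$), I would set $y=z=e$ in \eqref{p_one} to get $\theta_x(e)\,\theta_{xe}(e)=\theta_x(ee)$. Since $ee=e$ the right-hand side is $\theta_x(e)$, and since $xe=x$ forces $\theta_{xe}=\theta_x$, the left-hand side is $\theta_x(e)\,\theta_x(e)$; hence $\theta_x(e)^2=\theta_x(e)$, i.e. $\theta_x(e)\in\E(X)$.

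For part 2, assuming $x\in eX$ (that is, $ex=x$), I would instead take the first argument to be $e$ and set $y=e$, $z=x$ in \eqref{p_one}, obtaining $\theta_e(e)\,\theta_{ee}(x)=\theta_e(ex)$. Using $ee=e$ and $ex=x$ this becomes $\theta_e(e)\,\theta_e(x)=\theta_e(x)$. To interpret this identity as the desired membership, I would invoke \cref{lemma_idemp}-$1.$ with $f=e$ (noting that $e\le e$ always holds) to conclude $\theta_e(e)\in\E(X)$, so that $\theta_e(e)X=\{y\in X\mid \theta_e(e)y=y\}$ really is the principal right ideal generated by $\theta_e(e)$; the relation $\theta_e(e)\,\theta_e(x)=\theta_e(x)$ then places $\theta_e(x)$ in $\theta_e(e)X$.

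Each part is a one-line computation, so I do not expect a genuine obstacle; the only subtlety is the bookkeeping in choosing the right substitution into \eqref{p_one}. In part 1 the correct choice is to keep $x$ in the outer position and put $e$ in both inner slots, so that $xe=x$ can collapse $\theta_{xe}$ to $\theta_x$; in part 2 the correct choice is to put $e$ in the outer position and let $ex=x$ absorb the inner product. Both simplifications rest solely on $e$ being idempotent, and part 2 additionally depends on \cref{lemma_idemp}-$1.$ to make sense of the right-ideal notation $\theta_e(e)X$.
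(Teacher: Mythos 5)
Your proposal is correct and follows essentially the same route as the paper: part 1 is the specialization $y=z=e$ of \eqref{p_one} combined with $xe=x$, and part 2 is the specialization with $e$ in the outer slot combined with $ex=x$, closed off by \cref{lemma_idemp}-$1.$ to see that $\theta_e(e)\in\E(X)$ and hence that $\theta_e(e)\theta_e(x)=\theta_e(x)$ means $\theta_e(x)\in\theta_e(e)X$. The only difference is expository: you spell out the substitutions and the role of $e\le e$ explicitly, while the paper leaves them implicit.
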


\medskip

\noindent As a direct consequence of the previous lemma, we have the following properties for arbitrary solutions defined on a monoid.
\begin{lemma}\label{prop_monoid}
Let $M$ be a monoid with identity $1$ and $s(x,y)=(xy, \theta_x(y))$ a solution on $M$. Then, the following hold:
\begin{enumerate}
    \item $\theta_x(1) \in \E(M)$,
    \item $\theta_1=\theta_{\theta_x(1)}\theta_x$,
    \item $\theta_1(x) \in \theta_1(1)M$,
    \item $\theta_x=\theta_{\theta_1(x)}\theta_x$,   
    \end{enumerate}
    for every $x \in M$.
\end{lemma}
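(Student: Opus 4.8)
The plan is to obtain all four assertions by specialising the earlier results to the case $e=1$, exploiting two features of a monoid: the identity $1$ is itself an idempotent, and it is a \emph{two-sided} identity, so that in the notation of \cref{lemma_es_se} both principal-ideal-type sets collapse to the whole monoid. Indeed, $1\cdot x = x$ and $x\cdot 1 = x$ for every $x\in M$, whence $1\cdot M = \{x : 1\cdot x = x\} = M$ and $M\cdot 1 = \{x : x\cdot 1 = x\} = M$. Thus every element of $M$ lies simultaneously in $eX$ and in $Xe$ for $e=1$, which is precisely what allows \cref{lemma_es_se} to fire for an arbitrary $x$.

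For item $1$, I would invoke \cref{lemma_es_se}-$1$ with $e=1$: since $x\in M = Xe$, it yields $\theta_x(1)\in\E(M)$ immediately. Symmetrically, for item $3$ I would apply \cref{lemma_es_se}-$2$ with $e=1$: since $x\in M = eX$, we obtain $\theta_1(x)\in\theta_1(1)X = \theta_1(1)M$.

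Items $2$ and $4$ I would derive not from the idempotent lemmas but directly from the defining relation \eqref{p_two}, read as $\theta_{\theta_x(y)}\theta_{xy}=\theta_y$. Substituting $y=1$ and using $x\cdot 1 = x$ gives $\theta_{\theta_x(1)}\theta_x=\theta_1$, which is item $2$; substituting instead $x=1$, $y=x$ and using $1\cdot x = x$ gives $\theta_{\theta_1(x)}\theta_x=\theta_x$, which is item $4$.

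There is no genuine obstacle here: the only points to watch are the bookkeeping of which specialisation produces which statement and the trivial but essential verification that for $e=1$ the sets $eX$ and $Xe$ both equal all of $M$, so that the hypotheses of \cref{lemma_es_se} hold for every $x$. Note also that item $1$ guarantees $\theta_x(1)$ is truly an idempotent, so the symbol $\theta_{\theta_x(1)}$ appearing in item $2$ is a map attached to an element of $\E(M)$, keeping everything consistent with the general framework.
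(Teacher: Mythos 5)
Your proposal is correct and matches the paper's intent: the paper leaves this lemma as ``a direct consequence of the previous lemma,'' and your specialisation of \cref{lemma_es_se} to $e=1$ (using $1M=M1=M$) yields items $1$ and $3$, while the substitutions $y=1$ and $x=1$ in \eqref{p_two} yield items $2$ and $4$ exactly as you describe. No gaps.
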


In particular, it follows by \cref{prop_monoid}-$4.$ that the only non-degenerate solution on a monoid $M$ is that for which $\theta_x=\id_M$, for every $x \in M$.

\bigskip

We conclude this section focusing on solutions on semigroups having central idempotents, i.e., it holds $xe=ex$, for all $e \in \E(X)$ and $x \in X$. Obviously, in this case, $Xe=eX$. The result we provide is consistent with  Lemma 11 and the equation (4) of \cite{CaMaMi19}. Let us first recall, that if $e \in \E(X)$, the set
\begin{align*}
    H_e=\{x \in Xe \, \mid \, \exists \, y \in Xe \quad xy=yx=e\}
\end{align*}
is a group with identity $e$. In particular, if $e$ and $f$ are distinct idempotents, then $H_e$ and $H_f$ are disjoint. If $x \in H_e$, let us denote by $x^{-}$ the inverse of $x$ in $H_e$.

\begin{prop}\label{prop_ef_thetax}
Let $X$ be a semigroup having central idempotents, $e \in \E(X)$, $x \in H_e$, and $s(x,y)=(xy, \theta_x(y))$ a solution on $X$. Then, the following hold:
\begin{enumerate}
\item $\theta_e(e) \leq \theta_x(e)$;
 \item  $\theta_e(x) \in H_{\theta_e(e)}$ and in particular $\theta_e(x)^{-}=\theta_x\left(x^{-}\right)$;
 \item if $f \in \E(X)$ is such that $f \leq e$ and $y\in H_f$, then $
    \theta_x(y)=\theta_e\left(x\right)^{-}\theta_e(xy).$
 \end{enumerate}
\begin{proof}
At first, we have that, by \cref{lemma_idemp}-$1.$, $\theta_e(e) \in \E(X)$, and, by \cref{lemma_es_se}-$a.$, it holds that $\theta_x(e) \in \E(X)$. Thus,
\begin{align*}
   \theta_e(e) \theta_x(e)&=\theta_e\left(xx^{-}\right)\theta_x(e)=\theta_e(x)\theta_{ex}\left(x^-\right)\theta_x(e)=\theta_e(x)\theta_{ex}(e)\theta_{ex}\left(x^-\right) \\
   &=\theta_e(xe)\theta_{ex}\left(x^-\right)=\theta_e(x)\theta_{ex}\left(x^-\right)=\theta_{e}\left(xx^-\right)=\theta_e(e),
\end{align*}
and so $1.$ follows. Besides, by \cref{lemma_es_se}-$c.$, $\theta_e(x) \in X\theta_e(e)$ and also $\theta_x\left(x^-\right) \in X\theta_e(e)$, since $\theta_x\left(x^-\right)\theta_e(e)=\theta_x\left(x^-e\right)=\theta_x\left(x^-\right)$. Hence, we get
\begin{align*}
   &\theta_e(x)\theta_{x}\left(x^{-}\right)= \theta_e(x)\theta_{ex}\left(x^{-}\right)=\theta_e\left(xx^{-}\right)=\theta_e(e)\end{align*}
   and
\begin{align*}   \theta_{x}\left(x^{-}\right)\theta_e(x)&=\theta_{x}\left(x^{-}\right)\theta_e(x)\theta_e(e) &\mbox{$\theta_x\left(x^-\right) \in X\theta_e(e)$}\\
&=\theta_{x}\left(x^{-}x\right)\theta_e(e)\\
&=\theta_x(e)\theta_e(e)\\
&=\theta_e(e).
\end{align*}
Finally, if $f \in \E(X)$ is such that $f \leq e$ and $y \in H_f$, then, by $2.$, we obtain
\begin{align*}
    \theta_x(y)=\theta_x\left(efy\right)=\theta_x\left(xx^{-}fy\right)=\theta_x\left(x^{-}\right)\theta_{e}(xy)=\theta_e\left(x\right)^{-}\theta_e(xy),
\end{align*}
which completes the proof.
\end{proof}
\end{prop}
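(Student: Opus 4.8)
The plan is to prove the three assertions in the order stated, since each feeds into the next, and throughout I would lean on the identity \eqref{p_one}, on the centrality of the idempotents (which lets me slide factors such as $\theta_x(e)$ and $\theta_e(e)$ past arbitrary elements), and on the group relations in $H_e$, namely $xx^-=x^-x=e$, $ex=xe=x$, and $x^-e=ex^-=x^-$.

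For part $1.$, I would first record that both $\theta_e(e)$ and $\theta_x(e)$ are idempotents: the former by \cref{lemma_idemp}-$1.$ (taking $f=e$), the latter by \cref{lemma_es_se}-$1.$ since $x\in H_e\subseteq Xe$. Being central idempotents they commute, so it is enough to verify the single product $\theta_e(e)\theta_x(e)=\theta_e(e)$. I would start from $\theta_e(e)=\theta_e(xx^-)$, use \eqref{p_one} to split off a $\theta_e(x)$ factor, rewrite the middle subscript via $ex=x$, slide the central idempotent $\theta_x(e)$ into place, and then re-collapse with \eqref{p_one} back down to $\theta_e(xx^-)=\theta_e(e)$. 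This yields $\theta_e(e)\leq\theta_x(e)$ by definition of the order.

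For part $2.$, membership $\theta_e(x)\in H_{\theta_e(e)}$ amounts to exhibiting a two-sided inverse lying in the principal ideal of $\theta_e(e)$, and the natural candidate is $\theta_x(x^-)$. One direction, $\theta_e(x)\theta_x(x^-)=\theta_e(e)$, is immediate from \eqref{p_one} together with $ex=x$ and $xx^-=e$. The reverse, $\theta_x(x^-)\theta_e(x)=\theta_e(e)$, is the more delicate one: I would first establish $\theta_x(x^-)\in X\theta_e(e)$ (from $\theta_x(x^-)\theta_e(e)=\theta_x(x^-e)=\theta_x(x^-)$ via \eqref{p_one}), insert this central idempotent, fold the product with \eqref{p_one} into $\theta_x(x^-x)\theta_e(e)=\theta_x(e)\theta_e(e)$, and finish with part $1.$ Together with the ideal membership (from \cref{lemma_es_se}-$2.$ and centrality, which makes the one-sided ideals of $\theta_e(e)$ coincide) this places $\theta_e(x)$ in $H_{\theta_e(e)}$ with inverse $\theta_x(x^-)$.

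For part $3.$, part $2.$ identifies $\theta_e(x)^-=\theta_x(x^-)$, so it suffices to show $\theta_x(x^-)\theta_e(xy)=\theta_x(y)$. I would apply \eqref{p_one} to the pair $(x,x^-)$ with third entry $xy$, obtaining $\theta_x(x^-)\theta_{xx^-}(xy)=\theta_x(x^-xy)=\theta_x(ey)$, and then use $f\leq e$ and $y\in H_f$ to conclude $ey=(ef)y=fy=y$. I expect the main obstacle to be the bookkeeping in part $2.$: deciding when to apply \eqref{p_one} in the forward versus the backward direction and being scrupulous about where centrality of the idempotents is genuinely invoked, since that second identity is the single place where part $1.$, the ideal membership, and \eqref{p_one} must all be combined at once.
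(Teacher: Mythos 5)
Your proposal is correct and follows essentially the same route as the paper's proof: the same chain of \eqref{p_one} applications combined with centrality of the idempotents $\theta_e(e)$ and $\theta_x(e)$ for part $1.$, the same candidate inverse $\theta_x\left(x^{-}\right)$ with the same insertion of $\theta_e(e)$ to handle the reverse product in part $2.$, and the same collapse $\theta_x\left(x^{-}\right)\theta_e(xy)=\theta_x\left(x^{-}xy\right)=\theta_x(y)$ for part $3.$ The only differences are cosmetic (e.g., which ideal membership you invoke to insert $\theta_e(e)$), and your formulation of part $3.$ is in fact slightly cleaner than the paper's.
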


\bigskip

\section{Properties of the maps \texorpdfstring{$\theta_x$}{}  of idempotent solutions}
\noindent In this section, we collect some properties of the maps $\theta_x$ of  idempotent solutions on arbitrary semigroups.

\medskip

Initially, it is a routine computation to check that a solution $s(x,y)=(xy, \theta_x(y))$ on a semigroup $X$ is idempotent if and only if 
 \begin{align}
     &xy\theta_x(y)=xy,\tag{I1}\\
     &\theta_{xy}\theta_x(y)=\theta_x(y), \tag{I2}
 \end{align}
for all $x,y \in X$. In particular, by \eqref{i_one}, if $X$ is a group the unique idempotent solution on $X$ is the map $s(x,y)=(xy, 1)$; such a solution $s$ belongs to the class of solutions discussed in \cref{exs_PE}-$1.$. On the other hand, considering this class of solutions on monoids, one can easily check the following result.
\begin{prop}\label{prop_gamma}
Let $M$ be a monoid and $\gamma: M \to M$ a map. Then, $s(x,y)=(xy, \gamma(y))$ is an idempotent solution on $M$  if and only if $\gamma\in \End(M)$, $\gamma^2=\gamma$, and $x\gamma(x)=x$, for every $x \in M$. \end{prop}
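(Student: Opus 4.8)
The plan is to reduce the statement to the two idempotency conditions \eqref{i_one} and \eqref{i_two} combined with the characterization of constant-$\theta$ solutions recalled in \cref{exs_PE}-1. Since here $\theta_x=\gamma$ for every $x\in M$, condition \eqref{i_two} reads $\gamma(\gamma(y))=\gamma(y)$, i.e. $\gamma^2=\gamma$, while \eqref{i_one} reads $xy\gamma(y)=xy$ for all $x,y\in M$. Thus the whole proof amounts to showing that, for a solution of this form, the displayed condition $x\gamma(x)=x$ is equivalent to \eqref{i_one}, the membership $\gamma\in\End(M)$ and the idempotency $\gamma^2=\gamma$ being already handled by \cref{exs_PE}-1.

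For the forward implication I would assume that $s$ is an idempotent solution. Being a solution, \cref{exs_PE}-1. forces $\gamma\in\End(M)$ and $\gamma^2=\gamma$. Being idempotent, it satisfies \eqref{i_one}; evaluating this identity at $x=1$, where $1$ is the identity of the monoid, immediately yields $y\gamma(y)=y$ for every $y\in M$, which is precisely $x\gamma(x)=x$.

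For the converse I would assume $\gamma\in\End(M)$, $\gamma^2=\gamma$, and $x\gamma(x)=x$ for all $x\in M$. The first two properties give, via \cref{exs_PE}-1., that $s(x,y)=(xy,\gamma(y))$ is a solution on $M$, so it remains only to check idempotency through \eqref{i_one} and \eqref{i_two}. Condition \eqref{i_two} is exactly $\gamma^2=\gamma$, and for \eqref{i_one} I would use associativity together with the hypothesis to write $xy\gamma(y)=x\bigl(y\gamma(y)\bigr)=xy$, which closes the argument.

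Since everything follows from specializing \eqref{i_one} at the identity and reading off \eqref{i_two}, there is no genuine obstacle here; the only point deserving care is that the extra condition $x\gamma(x)=x$ cannot be dropped, as it is not implied by $\gamma$ being an idempotent endomorphism alone, so the use of the monoid identity to pass between $xy\gamma(y)=xy$ and $y\gamma(y)=y$ is essential.
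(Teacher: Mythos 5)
Your proposal is correct and follows exactly the route the paper intends: the paper offers no explicit proof (it says ``one can easily check''), and the intended verification is precisely the combination of \cref{exs_PE}-1.\ (which encodes \eqref{p_one} and \eqref{p_two} as $\gamma\in\End(M)$ and $\gamma^2=\gamma$) with the idempotency conditions \eqref{i_one} and \eqref{i_two}, where \eqref{i_two} collapses to $\gamma^2=\gamma$ and \eqref{i_one} is equivalent, via specialization at the identity, to $x\gamma(x)=x$. Your remark that the monoid identity is what makes \eqref{i_one} equivalent to the pointwise condition $x\gamma(x)=x$ is exactly the reason the proposition is stated for monoids rather than arbitrary semigroups.
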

\noindent By the previous proposition, in particular, the solution $s(x,y)=(xy, y)$ on $M$ is idempotent if and only if $M$ is an idempotent monoid. 

\medskip

However, taking monoids even of small orders, one can note that among the solutions there are several of the idempotent type that do not belong to the class of solutions in \cref{exs_PE}-$1.$. The following is an easy example.

\begin{ex}\cite[Appendix B]{tesi}
Let $X=\{1,a,b\}$ and $M$ be the commutative monoid on $X$ with identity $1$ and multiplication given by $a^2=a, ab=a,  b^2=1.$ Then, there are three idempotent solutions up to isomorphism:
\begin{enumerate}
    \item $s(x, y)=(xy, 1)$;
    \item $r(x,y)=(xy, \gamma(y))$, with  $\gamma :M \to M$ defined by $\gamma(1)=\gamma(b)=1$ and $\gamma(a)=a$;
    \item $t(x,y)=(xy, \theta_x(y))$, with $\theta_x:M \to M$ the map given by $\theta_x(1)=1, \, \theta_x(a)=a$, for every $x \in X$,  and $\theta_1(b)=\theta_b(b)=1$ and $\theta_a(b)=b$.
\end{enumerate}
\end{ex}

\medskip

Based on the above arguments, in the next, we will focus on idempotent solutions. We first give the following properties, which hold, in general, for idempotent solutions on arbitrary semigroups and whose proof is a routine computation.

\begin{prop} \label{prop_sidemp_semigruppo}
Let $X$ be a semigroup and $s(x,y)=(xy, \theta_x(y))$ an idempotent solution on $X$. Then, the following hold:
\begin{enumerate}
    \item $\theta_{\theta_x(y)}=\theta_y,$
    \item $\theta_y=\theta_y\theta_{xy},$
    \item $\theta_x(yz)=\theta_x(yz)\theta_y(z)$,
\end{enumerate}
for all $x, y,z \in X$.
\end{prop}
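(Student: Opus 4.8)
The plan is to verify each of the three identities directly from the two defining conditions of an idempotent solution, namely \eqref{i_one} and \eqref{i_two}, together with the pentagon relations \eqref{p_one} and \eqref{p_two} that every solution satisfies. Since the statement itself remarks that the proof is ``a routine computation,'' I expect no structural difficulty; the work is in choosing which relation to apply and with which substitution so that the algebra collapses cleanly. I would treat the three parts in the given order, as the first two feed naturally into the third.

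For part $1.$, the identity $\theta_{\theta_x(y)}=\theta_y$ is exactly the pentagon relation \eqref{p_two}, which reads $\theta_{\theta_x(y)}\theta_{xy}=\theta_y$, combined with the idempotency condition \eqref{i_two}. First I would recall that \eqref{i_two} gives $\theta_{xy}\theta_x(y)=\theta_x(y)$, which says that $\theta_x(y)$ is fixed by $\theta_{xy}$. The cleanest route is to start from \eqref{p_two} and post-compose appropriately, or to evaluate $\theta_{\theta_x(y)}$ on an arbitrary element and push the computation through \eqref{p_two} and \eqref{i_two}; the point is that the factor $\theta_{xy}$ appearing in \eqref{p_two} must be absorbed using idempotency. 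I would write out $\theta_{\theta_x(y)}(w)$ for arbitrary $w$ and reduce it to $\theta_y(w)$ using these two facts.

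For part $2.$, the identity $\theta_y=\theta_y\theta_{xy}$ follows by applying part $1.$ after a suitable relabelling, or directly from \eqref{p_two}. The natural idea is to replace the role of the inner index: since \eqref{p_two} gives $\theta_{\theta_x(y)}\theta_{xy}=\theta_y$ and part $1.$ gives $\theta_{\theta_x(y)}=\theta_y$, substituting the latter into the former yields precisely $\theta_y\theta_{xy}=\theta_y$, which is the claim. This makes part $2.$ an immediate corollary of part $1.$ and \eqref{p_two}, requiring essentially no further computation.

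For part $3.$, I would start from \eqref{p_one}, which states $\theta_x(y)\theta_{xy}(z)=\theta_x(yz)$. The target identity $\theta_x(yz)=\theta_x(yz)\theta_y(z)$ should emerge by substituting $\theta_x(yz)$ back in terms of the left-hand side of \eqref{p_one} and then invoking part $2.$ (in the form $\theta_y=\theta_y\theta_{xy}$) to rewrite $\theta_y(z)$ as $\theta_y\theta_{xy}(z)$, so that the two sides match after applying \eqref{p_one} once more. The main obstacle, such as it is, will be bookkeeping: ensuring that the indices on the nested $\theta$ maps line up so that \eqref{p_one} and part $2.$ can be applied in the right order, rather than any genuine conceptual difficulty. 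I would carry out this index-matching carefully and then confirm that the resulting expression reduces to $\theta_x(yz)$ on both sides.
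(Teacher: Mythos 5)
The paper itself gives no written proof of this proposition (it is dismissed as ``a routine computation''), so the only question is whether your sketch, executed as described, actually closes. Your ordering (part $1.$, then $2.$ as an immediate corollary, then $3.$) and your choice of ingredients are the right ones, and part $2.$ is handled correctly. But two of the steps, as literally stated, do not go through. For part $1.$, neither ``post-composing'' \eqref{p_two} nor evaluating $\theta_{\theta_x(y)}(w)$ pointwise with \eqref{i_two} alone suffices: evaluating \eqref{p_two} at $\theta_x(y)$ only gives equality of $\theta_{\theta_x(y)}$ and $\theta_y$ on the image of $\theta_{xy}$. The step that actually absorbs the factor $\theta_{xy}$ is a \emph{second instantiation} of \eqref{p_two} at the pair $(xy,\theta_x(y))$, namely $\theta_{\theta_{xy}\theta_x(y)}\,\theta_{(xy)\theta_x(y)}=\theta_{\theta_x(y)}$, where you need \eqref{i_two} to rewrite the first subscript as $\theta_x(y)$ \emph{and} \eqref{i_one} to rewrite the second subscript as $xy$ (your sketch never invokes \eqref{i_one} here). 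This yields $\theta_{\theta_x(y)}\theta_{xy}=\theta_{\theta_x(y)}$, which compared with \eqref{p_two} gives $\theta_{\theta_x(y)}=\theta_y$.

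For part $3.$ the final collapse is not ``applying \eqref{p_one} once more'': after writing $\theta_x(yz)\theta_y(z)=\theta_x(y)\,\theta_{xy}(z)\,\theta_{\theta_x(y)}\theta_{xy}(z)$ (using \eqref{p_one} and then either \eqref{p_two} directly, or your route via parts $2.$ and $1.$), the expression has exactly the shape $ab\,\theta_a(b)$ with $a=\theta_x(y)$ and $b=\theta_{xy}(z)$, and it is \eqref{i_one} that reduces it to $ab=\theta_x(y)\theta_{xy}(z)=\theta_x(yz)$. Your plan for part $3.$ never mentions \eqref{i_one}, yet the identity to be proved is precisely an instance of it in disguise, so without it the argument cannot terminate. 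With these two corrections the proof is indeed routine; note also that using \eqref{p_two} directly to rewrite $\theta_y(z)$ as $\theta_{\theta_x(y)}\theta_{xy}(z)$ makes part $3.$ independent of parts $1.$ and $2.$
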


\medskip

\begin{prop}\label{prop_Se}
Let $X$ be a semigroup, $e \in \E(X)$, and $s(x,y)=(xy, \theta_x(y))$ an idempotent solution on $X$.
\begin{enumerate} 
    \item If $x \in Xe$, then
    \begin{enumerate}
    \item[a.] $x \in X\theta_x(e)$,
    \item[b.]  $\forall y \in X \quad \theta_y(x) \in X\theta_x(e) $,
    \item[c.] $\theta_e=\theta_e\theta_x$.
\end{enumerate}
\item If $x \in eS$, then
\begin{enumerate}
    \item[d.]  $\theta_e(x) \in \E(X)$, 
    \item[e.]   $x \in X \, \theta_e(x)$,
     \item[f.] $\forall y \in X \quad \theta_y(x) \in X \theta_e(x)$,
    \item[g.] $\theta_x$ is an idempotent map.
\end{enumerate}
\end{enumerate}

\begin{proof}
Initially, assume that  $x \in Xe$. Then, by \cref{lemma_es_se}-$a.$, $\theta_x(e) \in \E(X)$. Moreover, by \eqref{i_one}, we get $x=xe=xe\theta_x(e)=x\theta_x(e)$. Besides, if $y \in X$, by  \cref{prop_sidemp_semigruppo}-$3.$, we have that \begin{align*}
    \theta_y(x)=\theta_y(xe)=\theta_y(xe)\theta_x(e)=\theta_y(x)\theta_x(e),
\end{align*}
and so $b.$ follows. Finally, by \cref{prop_sidemp_semigruppo}-$2.$ , $\theta_e=\theta_e\theta_{xe}=\theta_e\theta_x$, i.e., $c.$ holds.\\
Now, suppose that $x \in eX$. At first, by \eqref{i_one}, we obtain that $x=ex=ex\theta_e(x)=x\theta_e(x)$ and so\begin{align*}
    \theta_e(x)=\theta_e\left(x\theta_e\left(x\right)\right)=\theta_e(x)\theta_{ex}\theta_e(x)=\theta_e(x)^2,
\end{align*}
where in the last equality we apply \eqref{i_two}. Thus, $d.$ and $e.$ follow. Moreover,
\begin{align*}
    \theta_x&=\theta_{\theta_e(x)} &\mbox{by \cref{prop_sidemp_semigruppo}-$1.$}\\
    &=\theta_{\theta_{ex}\theta_e(x)}\theta_{ex\theta_e(x)} &\mbox{by \eqref{p_two}} \\
    &=\theta_{\theta_e(x)}\theta_x &\mbox{by \eqref{i_two}-\eqref{i_one}}\\
    &=\theta_x\theta_x &\mbox{by \cref{prop_sidemp_semigruppo}-$1.$}
\end{align*}
hence $\theta_x$ is an idempotent map. Finally, if $y \in X$, by \cref{prop_sidemp_semigruppo}-$3.$ , we have that \begin{align*}
    \theta_y(x)=\theta_y(ex)=\theta_y(ex)\theta_e(x)=\theta_y(x)\theta_e(x).
\end{align*}
Therefore, the claim follows.
\end{proof}
\end{prop}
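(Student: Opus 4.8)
The plan is to treat the two hypotheses $x \in Xe$ and $x \in eX$ separately, and in each case to deduce the individual assertions by feeding the relevant special instances into the idempotency conditions \eqref{i_one}--\eqref{i_two}, the defining relations \eqref{p_one}--\eqref{p_two}, and the auxiliary facts already collected in \cref{lemma_es_se} and \cref{prop_sidemp_semigruppo}. Throughout, the hypothesis is used in the form $xe=x$ (for Part~1) or $ex=x$ (for Part~2), which lets us freely insert or delete the idempotent $e$ next to $x$.

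For Part~1, I would first obtain $a.$ by evaluating \eqref{i_one} at the pair $(x,e)$, which gives $xe\,\theta_x(e)=xe$; since $xe=x$ this collapses to $x\,\theta_x(e)=x$, that is $x \in X\theta_x(e)$. For $b.$, I would write $\theta_y(x)=\theta_y(xe)$ and apply \cref{prop_sidemp_semigruppo}-$3.$ (with the roles $x\mapsto y$, $y\mapsto x$, $z\mapsto e$) to get $\theta_y(xe)=\theta_y(xe)\theta_x(e)$, whence $\theta_y(x)=\theta_y(x)\theta_x(e)$. Finally $c.$ is immediate from \cref{prop_sidemp_semigruppo}-$2.$ specialised to the index $e$, together with $\theta_{xe}=\theta_x$.

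For Part~2, the easy assertions are $e.$ and $f.$: evaluating \eqref{i_one} at $(e,x)$ yields $x\,\theta_e(x)=x$ after using $ex=x$, which is $e.$; and $f.$ follows exactly as $b.$ did, by writing $\theta_y(x)=\theta_y(ex)$ and invoking \cref{prop_sidemp_semigruppo}-$3.$. Assertion $d.$ requires a little more: substituting the just-proved $x=x\theta_e(x)$ into $\theta_e(x)$ and splitting $\theta_e\bigl(x\,\theta_e(x)\bigr)$ by means of \eqref{p_one}, then collapsing the second factor through \eqref{i_two}, gives $\theta_e(x)=\theta_e(x)^2$, so $\theta_e(x)\in\E(X)$.

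The step I expect to be the real obstacle is $g.$, the statement that $\theta_x$ is idempotent \emph{as a map}, since this is an identity between indexed maps rather than a pointwise relation and so cannot be read off directly from \eqref{i_one}--\eqref{i_two}. My plan here is to combine two facts about the index: on the one hand \cref{prop_sidemp_semigruppo}-$1.$ gives $\theta_{\theta_e(x)}=\theta_x$; on the other hand \eqref{p_two} applied to $(e,x)$ gives $\theta_{\theta_e(x)}\theta_{ex}=\theta_x$, which together with $ex=x$ reads $\theta_{\theta_e(x)}\theta_x=\theta_x$. Substituting the first identity into the second produces $\theta_x\theta_x=\theta_x$, as required. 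The delicate point is simply to make sure every manipulation of the subscripts is licensed by $ex=x$, so that indices such as $\theta_{ex}$ may legitimately be replaced by $\theta_x$.
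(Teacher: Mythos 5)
Your proof is correct and follows essentially the same route as the paper: the same case split, the same specialisations of \eqref{i_one}, \eqref{i_two} and \eqref{p_one}, and the same appeals to \cref{lemma_es_se} and \cref{prop_sidemp_semigruppo}. The only (harmless) divergence is in $g.$, where you apply \eqref{p_two} directly at the pair $(e,x)$ and use $ex=x$ to get $\theta_{\theta_e(x)}\theta_x=\theta_x$ before substituting $\theta_{\theta_e(x)}=\theta_x$; this is slightly more direct than the paper's application of \eqref{p_two} at the pair $\bigl(ex,\theta_e(x)\bigr)$ followed by simplifications via \eqref{i_one} and \eqref{i_two}, and both are valid.
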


\medskip

\noindent As a consequence of the previous proposition, we obtain the following result.
\begin{cor}\label{cor_eSe}
Let $X$ be a semigroup, $e \in \E(X)$, $x \in eXe$, and $s(x,y)=(xy, \theta_x(y))$ an idempotent solution on $X$. Then, the following hold:
\begin{enumerate}
\item $\theta_e(x) \in \E(X)$,
    \item $x \in X\theta_e(x) \cap X\theta_x(e)$,
    \item $\theta_e=\theta_e\theta_x$,
    \item $\forall y \in X \quad \theta_y(x) \in X \theta_e(x)  \cap X\theta_x(e)$,
    \item $\theta_x$ is idempotent.
\end{enumerate}
\end{cor}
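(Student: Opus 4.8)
The plan is to exploit the fact that the two hypotheses of \cref{prop_Se} are \emph{simultaneously} available here. By definition $eXe = eX \cap Xe$, so the assumption $x \in eXe$ means precisely that $ex = x$ and $xe = x$; equivalently, $x \in eX$ and $x \in Xe$ at once. First I would record this single observation, since it is the only genuine input beyond the preceding proposition: each hypothesis that was treated separately in \cref{prop_Se} now holds for the very same element $x$, so both lists of conclusions apply to it.

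Next I would read off the five assertions by pairing the relevant sub-items of \cref{prop_Se}. The membership $\theta_e(x) \in \E(X)$ of item $1.$ and the idempotency of $\theta_x$ in item $5.$ come directly from \cref{prop_Se}-$d.$ and \cref{prop_Se}-$g.$, which only require $x \in eX$. The equality $\theta_e = \theta_e\theta_x$ of item $3.$ is \cref{prop_Se}-$c.$, which only requires $x \in Xe$. The two intersection statements are exactly where both hypotheses enter: for item $2.$ one combines $x \in X\theta_e(x)$ (from \cref{prop_Se}-$e.$, using $x \in eX$) with $x \in X\theta_x(e)$ (from \cref{prop_Se}-$a.$, using $x \in Xe$); and for item $4.$, for an arbitrary $y \in X$, one combines $\theta_y(x) \in X\theta_e(x)$ (from \cref{prop_Se}-$f.$) with $\theta_y(x) \in X\theta_x(e)$ (from \cref{prop_Se}-$b.$). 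In each case the desired membership in the intersection is immediate once the two one-sided memberships are in hand.

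I do not expect any real obstacle: the corollary is a pure bookkeeping consequence of \cref{prop_Se}, and all the substance already lives in the proof of that proposition. The only point deserving a line of care is the equivalence $x \in eXe \Leftrightarrow (x \in eX \text{ and } x \in Xe)$, which guarantees that both families of conclusions apply to the same $x$; once this is stated, items $2.$ and $4.$ follow simply by intersecting the corresponding right-ideal conditions.
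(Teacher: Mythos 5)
Your proposal is correct and is exactly the argument the paper intends: the corollary is stated as an immediate consequence of \cref{prop_Se}, obtained by noting that $x \in eXe = eX \cap Xe$ makes both families of conclusions available for the same element, and your pairing of the sub-items (d., g., c., e.\ with a., f.\ with b.) matches the statement item by item.
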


\medskip

\begin{rem}\label{rem_theta1_idem}
 Note that if $s$ is an idempotent solution on a monoid $M$ with identity $1$, in general, $\theta_1(\E(M)) \neq \E(M)$. Indeed, if we consider the set $X=\{1,a,b\}$ and the commutative monoid $M$ on $X$ with identity $1$, $\E(M)=\{1, a\}$, and such that $ab=a$, we have that the only idempotent solution is $s(x,y)=(xy, 1).$
\end{rem}

\bigskip

\section{Idempotent solutions on monoids having central idempotents}
\noindent In this section, we focus only on idempotent solutions defined on monoids. In particular, we will give a description theorem for idempotent solutions defined on monoids having central idempotents. In addition, we will show that specific idempotent solutions are strictly linked to the kernel congruence of an idempotent monoid homomorphism.
\medskip

Initially, given an idempotent solution $s$ on $M$, all the statements of \cref{cor_eSe} hold for the identity $1$. In particular, $\theta_1(x) \in \E(M)$ and $x \in M \theta_1(x)$, for every $x \in M$. As a consequence, we have the following:

\begin{prop}
Let $M$ be a cancellative monoid. Then, $s(x,y)=(xy, 1)$ is the unique idempotent solution on $M$.
\begin{proof}
Since the identity is the unique idempotent of $M$, then $\theta_1(x)=1$, for every $x \in M$. Moreover, by \cref{prop_sidemp_semigruppo}-$1.$, $\theta_x=\theta_{\theta_1(x)}$, for every $x \in M$, and so the claim follows.
\end{proof}
\end{prop}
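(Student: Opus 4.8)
The plan is to show that on a cancellative monoid $M$, any idempotent solution must reduce to the trivial one $s(x,y)=(xy,1)$. The key observation is that cancellativity severely restricts the idempotents: if $e\in\E(M)$, then $e\,e=e=e\cdot 1$, and cancelling $e$ on the left forces $e=1$. Hence $\E(M)=\{1\}$, the identity is the only idempotent. This is the structural fact I would isolate first, since everything else follows from it via the results already established for idempotent solutions on monoids.

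Next I would invoke \cref{cor_eSe} applied to the idempotent $e=1$, or more directly the observations collected at the start of the section, which give $\theta_1(x)\in\E(M)$ for every $x\in M$. Combining this with $\E(M)=\{1\}$ immediately yields $\theta_1(x)=1$ for all $x\in M$, so $\theta_1$ is the constant map at the identity. This handles the ``inner'' map $\theta_1$ completely and is the crux of the argument.

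Finally I would propagate this to every $\theta_x$. Here \cref{prop_sidemp_semigruppo}-$1.$ is exactly the tool needed: it states $\theta_{\theta_x(y)}=\theta_y$ for all $x,y$, and in particular, taking the relation in the form $\theta_x=\theta_{\theta_1(x)}$ (valid for monoids, since $\theta_{\theta_1(x)}=\theta_{\theta_x(1)}\cdots$ collapses appropriately for idempotent solutions), we get $\theta_x=\theta_{\theta_1(x)}=\theta_1$ for every $x\in M$, because $\theta_1(x)=1$. Thus every $\theta_x$ coincides with $\theta_1$, which sends everything to $1$, and so $s(x,y)=(xy,\theta_x(y))=(xy,1)$ for all $x,y\in M$. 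Conversely, $s(x,y)=(xy,1)$ is always an idempotent solution by \cref{exs_PE}-$2.$ (taking $e=1$), establishing existence and uniqueness.

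I do not expect a genuine obstacle here: the argument is short and each step is a direct citation of an earlier result once $\E(M)=\{1\}$ is noted. The only point requiring mild care is confirming that the identity $\theta_x=\theta_{\theta_1(x)}$ holds in the form I want; this follows from \cref{prop_sidemp_semigruppo}-$1.$ by substituting a suitable argument, so the potential pitfall is merely choosing the correct instantiation of the indices rather than any deep difficulty.
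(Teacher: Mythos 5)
Your argument is correct and is essentially the paper's own proof: note $\E(M)=\{1\}$ by cancellation, conclude $\theta_1\equiv 1$ from $\theta_1(x)\in\E(M)$, and then use \cref{prop_sidemp_semigruppo}-$1.$ (instantiated as $\theta_{\theta_1(x)}=\theta_x$, i.e.\ \cref{prop_sidemp_monoide}-$2.$) to get $\theta_x=\theta_1$ for all $x$. The only difference is that you spell out the cancellation step and the converse explicitly, which the paper leaves implicit.
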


Thus, from now on we will consider not cancellative monoids. We have the following properties.

\begin{prop}\label{prop_sidemp_monoide}
If $M$ is a monoid and $s(x,y)=(xy, \theta_x(y))$ an idempotent solution on $M$, then
\begin{enumerate}
    \item  $\theta_1(1)=1$,
\item $\theta_x=\theta_{\theta_1(x)}$, 
    \item  $\theta_1(x) \leq \theta_x(1)$, 
    \item $\theta_x$ is idempotent, 
\end{enumerate}
for every $x \in M$.
\begin{proof}
The first statement directly follows by \eqref{i_one}. The second one follows by \cref{prop_sidemp_semigruppo}-$1.$ and the fourth one by $g.$ in \cref{prop_Se}. Moreover, if $x \in M$, then $\theta_1(x)=\theta_1(x1)=\theta_1(x)\theta_x(1)$. On the other hand, by \eqref{i_two}, it holds $\theta_x\theta_1(x)=\theta_1(x)$, and so we obtain
\begin{align*}
    \theta_x(1)\theta_1(x)&=\theta_x(1)\theta_x\theta_1(x)=\theta_x(1\theta_1(x))=\theta_x\theta_1(x)=\theta_1(x),
\end{align*}
i.e., $\theta_1(x) \leq \theta_x(1)$.
\end{proof}
\end{prop}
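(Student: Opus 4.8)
The plan is to take the four claims in increasing order of difficulty, relying throughout on the defining relations \eqref{p_one}, \eqref{p_two}, \eqref{i_one}, \eqref{i_two} together with the facts already collected in \cref{prop_sidemp_semigruppo} and \cref{prop_Se}. The first three of the four statements should fall out almost immediately. For statement $1.$ I would specialise \eqref{i_one} at $x=y=1$, which forces $\theta_1(1)=1$. Statement $2.$ is a direct instance of \cref{prop_sidemp_semigruppo}-$1.$: putting the outer index equal to $1$ in the identity $\theta_{\theta_x(y)}=\theta_y$ gives $\theta_{\theta_1(y)}=\theta_y$ for every $y$, which is exactly the asserted $\theta_x=\theta_{\theta_1(x)}$ after renaming. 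Statement $4.$ follows by invoking \cref{prop_Se}-$g.$ with the idempotent $e=1$: since $1M=\{x\mid 1\cdot x=x\}=M$, every $x\in M$ lies in $1\cdot M$, so $\theta_x$ is idempotent.

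The real content is statement $3.$, $\theta_1(x)\leq\theta_x(1)$. Because the order on idempotents is defined by $ef=fe=e$, I must first record that both elements are idempotents — $\theta_1(x)\in\E(M)$ is already noted just before the proposition (it is \cref{cor_eSe} applied at $e=1$), and $\theta_x(1)\in\E(M)$ is \cref{prop_monoid}-$1.$ — and then verify the two products $\theta_1(x)\theta_x(1)=\theta_1(x)$ and $\theta_x(1)\theta_1(x)=\theta_1(x)$. The first product is the easy direction: writing $\theta_1(x)=\theta_1(x\cdot 1)$ and applying \eqref{p_one} with $(x,y,z)=(1,x,1)$ collapses it at once to $\theta_1(x)=\theta_1(x)\theta_x(1)$.

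The second product, $\theta_x(1)\theta_1(x)=\theta_1(x)$, is where I expect the main obstacle, since here the two factors do not combine directly. The key auxiliary fact I would extract first is that $\theta_1(x)$ is fixed by $\theta_x$, i.e.\ $\theta_x\theta_1(x)=\theta_1(x)$; this comes from \eqref{i_two} at $(x,y)=(1,x)$, using $1\cdot x=x$. Substituting this into the second factor rewrites $\theta_x(1)\theta_1(x)$ as $\theta_x(1)\,\theta_x(\theta_1(x))$, a product of two $\theta_x$-images, which \eqref{p_one} with $(x,y,z)=(x,1,\theta_1(x))$ then fuses into $\theta_x(1\cdot\theta_1(x))=\theta_x(\theta_1(x))=\theta_1(x)$. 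The delicate point is purely bookkeeping: one must choose the substitutions in \eqref{p_one} and \eqref{i_two} so that the subscripts reduce correctly under $x\cdot 1=x$ and $1\cdot\theta_1(x)=\theta_1(x)$, and in particular so that the middle index $xy$ in \eqref{p_one} simplifies to $x$. Once both products are in hand, the definition of the natural partial order yields $\theta_1(x)\leq\theta_x(1)$, completing the proof.
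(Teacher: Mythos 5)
Your proposal is correct and follows essentially the same route as the paper: items $1.$, $2.$, $4.$ by the same specialisations of \eqref{i_one}, \cref{prop_sidemp_semigruppo}-$1.$ and \cref{prop_Se}-$g.$, and item $3.$ by exactly the paper's two computations, namely $\theta_1(x)=\theta_1(x\cdot 1)=\theta_1(x)\theta_x(1)$ via \eqref{p_one} and $\theta_x(1)\theta_1(x)=\theta_x(1)\theta_x\theta_1(x)=\theta_x(\theta_1(x))=\theta_1(x)$ via \eqref{i_two} and \eqref{p_one}. Your explicit check that both elements are idempotents (so that $\leq$ applies) is a small, harmless addition the paper leaves implicit.
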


\medskip

Given a monoid $M$, recall that a \emph{right unit} is an element $r$ of $M$ for which there exists $r'\in M$ such that $rr'=1$.
Analogously, $l\in M$ is a \emph{left unit} of $M$ if there exists a left inverse $l'\in M$ such that $l'l=1$. Next, we prove some properties that hold for any right unit of a monoid $M$ and that can be shown also for any left unit $l \in M$ (exchanging the roles of $r$ and $l'$ and of $r'$ and $l$, respectively).

\begin{prop}\label{prop_right}
Let $M$ be a monoid and $s(x,y)=(xy, \theta_x(y))$ an idempotent solution on $M$. If $r\in M$ is a right unit of $M$, then the following hold:
\begin{enumerate}
    \item $\theta_r\left(r'\right)=1$, where $r'\in M$ is such that $rr'=1$,
    \item $\theta_1(r)=1$,
    \item $\theta_1=\theta_r$.
\end{enumerate}
\begin{proof}
The equality $\theta_r\left(r'\right)=1$ follows by setting $x=r$ and $y=r'$ in \eqref{i_one}.
As a consequence, we have that
$\theta_1(r)=\theta_1(r)\cdot 1=\theta_1(r)\theta_r\left(r'\right)=\theta_1\left(rr'\right)=\theta_1(1)=1$.  Moreover,  by  \cref{prop_monoid}-$4.$, it follows that
$\theta_r=\theta_{\theta_1(r)}=\theta_1.
$ 
\end{proof}
\end{prop}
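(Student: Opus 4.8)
The plan is to prove the three statements of \cref{prop_right} in the order given, since each one feeds into the next. Throughout I will use the two idempotency conditions \eqref{i_one} and \eqref{i_two}, together with \cref{prop_monoid}, which are available for arbitrary monoids.

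For statement $1.$, I would simply instantiate the idempotency condition \eqref{i_one}, which reads $xy\,\theta_x(y)=xy$ for all $x,y\in M$. Setting $x=r$ and $y=r'$, where $r'$ is chosen so that $rr'=1$, the left side becomes $rr'\,\theta_r(r')=\theta_r(r')$ and the right side becomes $rr'=1$, giving $\theta_r(r')=1$ at once. This is the base case and requires no cleverness.

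For statement $2.$, I would use statement $1.$ together with the functional property \eqref{p_one}, namely $\theta_x(y)\,\theta_{xy}(z)=\theta_x(yz)$. The natural move is to compute $\theta_1(r)$ by multiplying it (harmlessly) by $1=\theta_r(r')$ and recognizing the product as an instance of \eqref{p_one}. Concretely, $\theta_1(r)=\theta_1(r)\,\theta_r(r')=\theta_1(r)\,\theta_{1\cdot r}(r')=\theta_1(r r')=\theta_1(1)=1$, where the penultimate equality is \eqref{p_one} with $x=1$, $y=r$, $z=r'$, and the last uses $\theta_1(1)=1$ from \cref{prop_sidemp_monoide}-$1.$ (or directly from \eqref{i_one}). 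The only subtlety is lining up the indices in \eqref{p_one} correctly, but since $x=1$ the index $xy=r$ matches cleanly.

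For statement $3.$, I would invoke \cref{prop_monoid}-$4.$, which states $\theta_x=\theta_{\theta_1(x)}\theta_x$ for every $x$; more directly, I would use \cref{prop_sidemp_monoide}-$2.$, $\theta_x=\theta_{\theta_1(x)}$. Applying the latter with $x=r$ and substituting $\theta_1(r)=1$ from statement $2.$ yields $\theta_r=\theta_{\theta_1(r)}=\theta_1$, as desired. I do not expect any genuine obstacle here: the whole proposition is a short chain of substitutions, and the only place demanding care is keeping the subscripts in \eqref{p_one} aligned during statement $2.$; everything else is bookkeeping once statement $1.$ is in hand.
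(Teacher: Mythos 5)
Your proof is correct and follows essentially the same route as the paper: instantiate \eqref{i_one} at $(r,r')$, then use \eqref{p_one} with $x=1$ to get $\theta_1(r)=1$, then conclude $\theta_r=\theta_{\theta_1(r)}=\theta_1$. Your citation for the last step is in fact the more accurate one: the paper invokes \cref{prop_monoid}-$4.$, which literally only yields $\theta_r=\theta_1\theta_r$, whereas the equality chain actually used is \cref{prop_sidemp_monoide}-$2.$ (equivalently \cref{prop_sidemp_semigruppo}-$1.$), exactly as you wrote.
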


\medskip
In general, it is not true that $\theta_x\left(M^\times\right) \subseteq M^\times$, where $M^X$ is the group of units of a monoid $M$, as we show in the next example.
\begin{ex}\cite[Appendix B]{tesi} \label{ex5}
 Let us consider the set $X=\{1,a,b\}$ and the idempotent commutative monoid $M$ on $X$ with identity $1$ and such that $ab=b$. Clearly, $M^X=\{1\}$. Then, there exists an idempotent solution on $M$ for which $\theta_b(1)=a.$ Such a solution is defined considering the maps $\theta_1=\theta_a:M \to M$ given by $\theta_1(1)=\theta_a(1)=1$ and $\theta_1(b)=b$ and the map $\theta_b: M \to M$ given by $\theta_b(1)=\theta_b(a)=a$ and $\theta_b(b)=b$.
\end{ex}

\medskip

In the last part of this section, we will focus on idempotent solutions on monoids $M$ for which $\theta_1$ is also a homomorphism from $M$ to $\E(M)$. This assumption is not restrictive, as we show in the next result. Indeed, it is a necessary condition for idempotent solutions defined on monoids in which the idempotents are central. In the following, let us denote by $\Z(M)$ the center of $M$.

\begin{prop}\label{e_m_z_m}
Let $M$ be a monoid such that $\E(M) \subseteq \Z(M)$ and $s(x,y)=(xy, \theta_x(y))$ an idempotent solution on $M$. Then, the map $\theta_1$ is an idempotent monoid homomorphism from $M$ to $\E(M)$.
\begin{proof}
Initially, by \cref{prop_sidemp_monoide}-$4.$, the map $\theta_1$ is idempotent. Besides, recalling that $\theta_1(x) \in \E(M)$, for any $x \in M$, we have that
\begin{align*}
    \theta_1(xy)&=\theta_1(x)\theta_x(y)\theta_1(x) &\mbox{by \cref{cor_eSe}-$1.$}\\
    &=\theta_1\theta_1(x)\theta_{\theta_1(x)}(y)\theta_1(x) &\mbox{by  \cref{prop_sidemp_semigruppo}-$1.$}\\
    &=\theta_1\left(\theta_1(x)y\right)\theta_1(x) &\mbox{by \eqref{p_one}}\\
    &=\theta_1\left(y\theta_1(x)\right)\theta_1(x)\\
    &=\theta_1(y)\theta_y\theta_1(x) \theta_1(x) &\mbox{by \eqref{p_one}}\\
     &=\theta_1(y)\theta_1(x)\theta_y\theta_1(x)  \\
    &=\theta_1(y) \theta_1(x)\theta_{\theta_1(y)}\theta_1(x) &\mbox{by  \cref{prop_sidemp_semigruppo}-$1.$}\\
    &=\theta_1(y) \theta_1(x) &\mbox{by \eqref{i_one}}
\end{align*}
for all $x, y \in M$. Finally, by  \cref{prop_sidemp_monoide}-$1.$, it holds that $\theta_1(1)=1$, hence the claim follows.
\end{proof}
\end{prop}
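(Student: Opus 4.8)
The plan is to prove that $\theta_1$ is an idempotent monoid homomorphism from $M$ to $\E(M)$ by establishing three things in sequence: that $\theta_1$ lands in $\E(M)$, that it is idempotent as a map, and that it respects the multiplication (including sending $1$ to $1$). The first two are essentially already available to me from the accumulated machinery, so the real work is the multiplicativity $\theta_1(xy)=\theta_1(x)\theta_1(y)$.

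First I would dispose of the easy parts. By \cref{cor_eSe} applied with $e=1$ (since every $x\in M$ lies in $1\cdot M\cdot 1$), I get $\theta_1(x)\in\E(M)$ for every $x$, so the codomain is genuinely $\E(M)$; idempotence of $\theta_1$ as a self-map of $M$ is \cref{prop_sidemp_monoide}-$4.$; and $\theta_1(1)=1$ is \cref{prop_sidemp_monoide}-$1.$. This reduces the whole proposition to verifying the homomorphism identity $\theta_1(xy)=\theta_1(x)\theta_1(y)$ for all $x,y\in M$.

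For the multiplicativity I would run a chain of rewrites on $\theta_1(xy)$, exploiting two structural inputs: the pentagon relation \eqref{p_one} in the form $\theta_a(b)\,\theta_{ab}(c)=\theta_a(bc)$, and the centrality hypothesis $\E(M)\subseteq\Z(M)$, which lets me freely commute any $\theta_1$-value past any element of $M$. The starting move is to express $\theta_1(xy)$ in a form that isolates a factor $\theta_1(x)$ on the right; since $\theta_1(x)\in\E(M)$ and $x\in M\theta_1(x)$ (again \cref{cor_eSe}), I can write $\theta_1(xy)=\theta_1(x)\theta_x(y)\theta_1(x)$. Then I would repeatedly use \cref{prop_sidemp_semigruppo}-$1.$ (namely $\theta_{\theta_1(x)}=\theta_x$, since $\theta_1(x)=\theta_{?}$-image) to replace subscripts $x$ by $\theta_1(x)$, push the central idempotent $\theta_1(x)$ through products, and fold adjacent $\theta$-terms back together via \eqref{p_one} read in reverse. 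The aim is to transform the expression step by step until a terminal application of the idempotency relation \eqref{i_one}, $ab\,\theta_a(b)=ab$, collapses a trailing $\theta_y\theta_1(x)$-type factor and leaves exactly $\theta_1(y)\theta_1(x)$, which equals $\theta_1(x)\theta_1(y)$ by centrality.

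The main obstacle is organizing this rewriting so that each substitution is legitimate — in particular, making sure that when I swap a subscript from $x$ to $\theta_1(x)$ I am genuinely in the scope of \cref{prop_sidemp_semigruppo}-$1.$, and that each reassembly via \eqref{p_one} matches the pattern $\theta_a(b)\theta_{ab}(c)$ exactly (the middle subscript must be the product of the first subscript and its argument). The centrality of idempotents is what keeps the bookkeeping tractable: it guarantees the stray $\theta_1(x)$ factors can be slid to wherever they are needed to trigger \eqref{p_one} or \eqref{i_one}. I expect the delicate point to be the final collapse, where I must recognize that the accumulated factor has the form $ab\,\theta_a(b)$ so that \eqref{i_one} applies and annihilates the redundant $\theta$-term; getting the operands $a,b$ to line up correctly is the crux of the computation.
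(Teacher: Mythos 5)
Your proposal is correct and follows essentially the same route as the paper: the easy parts ($\theta_1(x)\in\E(M)$, idempotence of $\theta_1$, $\theta_1(1)=1$) are dispatched by the same lemmas, and the multiplicativity is obtained by the same rewriting chain, starting from $\theta_1(xy)=\theta_1(x)\theta_x(y)\theta_1(x)$ and ending with the collapse $\theta_1(y)\theta_1(x)\theta_{\theta_1(y)}\theta_1(x)=\theta_1(y)\theta_1(x)$ via \eqref{i_one}. The intermediate steps you outline (subscript swaps via \cref{prop_sidemp_semigruppo}-$1.$, folding via \eqref{p_one}, sliding central idempotents) are exactly those carried out in the paper's displayed computation.
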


    \noindent Note that the converse of \cref{e_m_z_m} is not true. Indeed, the map $s(x, y)=(xy, 1)$ is a solution in any monoid.

\medskip

\begin{lemma}\label{rem_I1}
Let $M$ be a monoid such that $\E(M) \subseteq \Z(M)$ and  $s(x,y)=(xy, \theta_x(y))$ a solution on $M$ for which $\theta_1$ is an idempotent monoid homomorphism from $M$ to $\E(M)$ such that $x=x\theta_1(x)$, for every $x \in M$. Then, $s$ is idempotent if and only if \eqref{i_two} is satisfied. 
\end{lemma}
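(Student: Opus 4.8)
The plan is to notice that, by the characterisation recalled in the text, a solution is idempotent precisely when both \eqref{i_one} and \eqref{i_two} hold, so the implication ``$s$ idempotent $\Rightarrow$ \eqref{i_two}'' is immediate. The real content is the converse, and in fact I would prove the stronger assertion that \eqref{i_one} holds \emph{automatically} under the standing hypotheses; once this is established, idempotency is equivalent to \eqref{i_two} alone, which is exactly the claim.

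To prove \eqref{i_one}, fix $x,y \in M$ and aim at $xy\,\theta_x(y)=xy$. First I would record the auxiliary identity $xy\,\theta_1(x)=xy$: since $\theta_1(x)\in\E(M)\subseteq\Z(M)$ is central it may be moved past $y$, giving $xy\,\theta_1(x)=x\,\theta_1(x)\,y$, and then the hypothesis $x=x\,\theta_1(x)$ absorbs it, yielding $xy$. Secondly, I would specialise the solution axiom \eqref{p_one} at first variable $1$, obtaining $\theta_1(x)\,\theta_x(y)=\theta_1(xy)$.

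Combining the two observations produces the chain
\begin{align*}
xy\,\theta_x(y)=\bigl(xy\,\theta_1(x)\bigr)\theta_x(y)=xy\,\bigl(\theta_1(x)\,\theta_x(y)\bigr)=xy\,\theta_1(xy)=xy,
\end{align*}
where the first equality inserts the auxiliary identity, the third uses the specialised \eqref{p_one}, and the last applies $z=z\,\theta_1(z)$ to $z=xy$. This is precisely \eqref{i_one}, so $s$ satisfies \eqref{i_one} independently of \eqref{i_two}, whence $s$ is idempotent if and only if \eqref{i_two} holds.

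The only delicate point is recognising that \eqref{i_one} is forced by the structural hypotheses — the centrality of the idempotents, the relation $x=x\,\theta_1(x)$, and \eqref{p_one} — rather than requiring \eqref{i_two}; the computation itself is then a short insertion of the central idempotent $\theta_1(x)$. It is worth remarking that the homomorphism property of $\theta_1$, although part of the ambient framework coming from \cref{e_m_z_m}, is not actually needed in deriving \eqref{i_one}.
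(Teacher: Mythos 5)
Your proof is correct and follows essentially the same route as the paper's: both show that \eqref{i_one} is automatic by inserting the central idempotent $\theta_1(x)$, using $z=z\,\theta_1(z)$ and \eqref{p_one} specialised at $1$ (your chain is the paper's computation read in reverse). Your closing remark that the homomorphism property of $\theta_1$ is not actually needed is a valid minor refinement, since the paper's step $xy\,\theta_1(x)\theta_1(y)=xy\,\theta_1(xy)$ can indeed be bypassed as you do.
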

\begin{proof}
It is enough to notice that \eqref{i_one} holds since, by \eqref{p_one},
\begin{align*}   xy&=x\theta_1(x)y\theta_1(y)=xy\theta_1(xy)=xy\theta_1(x)\theta_x(y)=x\theta_1(x)y\theta_x(y)=xy\theta_x(y),
\end{align*}
for all $x, y \in M$. 
\end{proof}

\medskip
The next result is a description of all idempotent solutions on a monoid $M$ having central idempotents. 
\begin{theor}\label{theor_solu_monoid}
 Let $M$ be a  monoid for which  $\E(M)  \subseteq \Z(M)$ and $\mu$ an idempotent monoid homomorphism from $M$ to $\E(M)$ such that, for every $x \in M$, $\mu(x)=e_x$, with $e_x \in \E(M)$ a right identity for $x$.
 Moreover, let $\{ \theta_e: M \to M \, \mid \, e \in \im \mu\}$ be a family of maps such that $\theta_1=\mu$, and for every $e \in \im \mu$,
 \begin{align}\label{ast}
     \theta_e(xy)=\theta_e(x)\theta_f(y),
 \end{align} 
 for all $x,y \in M$,  with $f=\mu(ex)$, and
 \begin{align}\label{astast}
     \theta_e=\theta_e\theta_{ef},
 \end{align}
 for every $ f \in \im \mu$, and
  \begin{align}\label{astastast}
     \theta_{ef}\theta_e(x)=\theta_e(x),
 \end{align}
 for every $x\in M$, with $f=\mu(x)$. Then, set $\theta_x=\theta_{\mu(x)}$, for every $x \in M$, one has that the map $s: M\times M \to M \times M$ given by  $s(x,y)=(xy, \theta_x(y))$ is an idempotent solution on $M$. Conversely, every idempotent solution on $M$ can be so constructed.
 \begin{proof}
 Let $x, y, z \in M$. Then, using \eqref{ast} we obtain \eqref{p_one}, since
 \begin{align*}
     \theta_x(y)\theta_{xy}(z)=\theta_{\mu(x)}(y)\theta_{\mu(\mu(x)y)}(z)=\theta_{\mu(x)}(yz)=\theta_x(yz).
 \end{align*}
Moreover,
 \begin{align*}
     \theta_{\theta_x(y)}\theta_{xy}(z)&=\theta_{\mu\theta_x(y)}\theta_{\mu(x)\mu(y)}(z) \\
     &=\theta_{\theta_1\theta_x(y)}\theta_{\theta_1(x)\theta_1(y)}(z)\\
     &=\theta_{\theta_1(y)}\theta_{\theta_1(x)\theta_1(y)}(z) &\mbox{by \eqref{astast} since $\theta_1=\theta_1\theta_x$}\\
     &=\theta_y(z)   &\mbox{by \eqref{astast}}
 \end{align*}
and so \eqref{p_two} is satisfied.  Thus, by \cref{rem_I1}, \eqref{i_one} holds.  Finally, applying \eqref{astastast}, we get
\begin{align*}
    \theta_{xy}\theta_x(y)=\theta_{\mu(x)\mu(y)}\theta_{\mu(x)}(y)=\theta_{\mu(x)}(y)=\theta_x(y).
\end{align*}
Therefore, $s(x,y)=(xy, \theta_x(y))$ is an idempotent solution on $M$.\\
Vice versa, if we assume that $s(x,y)=(xy, \theta_x(y))$ is an idempotent solution on $M$, then by \cref{e_m_z_m}, $\mu=\theta_1$ is an idempotent monoid homomorphism, and, by \cref{prop_Se}-$2.$(e), we have that $x \in M \theta_1(x)$, for every $x \in M$. In addition, by \cref{prop_sidemp_monoide}-$2.$, $\theta_x=\theta_{\theta_1(x)}$, for every $x \in M$. Hence, by\eqref{p_one}, we obtain \eqref{ast}. Now, let $e,f \in \im\theta_1$, thus there exist $x,y \in M$ such that $e=\theta_1(x)$ and $f=\theta_1(y)$.  Besides, by \eqref{p_two}, $$\theta_e=\theta_{\theta_1(x)}=\theta_x=\theta_{\theta_y(x)}\theta_{yx}=\theta_{\theta_1(x)}\theta_{\theta_1(y)\theta_1(x)}=\theta_e\theta_{fe},$$
and so \eqref{astastast}  holds. Finally, by \eqref{i_two}, if $e \in \im \theta_1$, $x \in M$, and $f=\theta_1(x)$, we obtain
$$\theta_{ef}\theta_e(x)=\theta_{\theta_1(xy)}\theta_{\theta_1(x)}(y)=\theta_{xy}\theta_x(y)=\theta_x(y)=\theta_{\theta_1(x)}(y)=\theta_e(y),$$
for every $y \in M$, hence \eqref{astastast} holds.
 \end{proof}
\end{theor}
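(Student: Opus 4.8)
The plan is to prove both directions of the equivalence, treating the construction (forward) direction and the recovery (converse) direction symmetrically, since the hypotheses \eqref{ast}, \eqref{astast}, and \eqref{astastast} are exactly engineered to encode \eqref{p_one}, \eqref{p_two}, and \eqref{i_two} after one substitutes $\theta_x=\theta_{\mu(x)}$. First I would observe that because $\mu=\theta_1$ is an idempotent monoid homomorphism into the central idempotents $\E(M)$, the composite identities simplify dramatically: $\mu(xy)=\mu(x)\mu(y)$, $\mu\mu=\mu$, and every value $\mu(x)=e_x$ is a right identity for $x$, so $x=x\mu(x)$ holds for all $x\in M$. This last fact is precisely what \cref{rem_I1} needs in order to reduce the idempotency conditions to the single relation \eqref{i_two}; so the role of \eqref{i_one} can be discharged for free once the solution axioms hold.

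For the forward direction I would verify, in order, \eqref{p_one}, \eqref{p_two}, and then idempotency. For \eqref{p_one} I rewrite $\theta_x(y)\theta_{xy}(z)=\theta_{\mu(x)}(y)\theta_{\mu(\mu(x)y)}(z)$ and apply \eqref{ast} with $e=\mu(x)$ and $f=\mu(\mu(x)y)=\mu(ex)$ (here $\mu$ being a homomorphism with $\mu\mu=\mu$ collapses $\mu(\mu(x)y)$ to the correct index) to collapse the product to $\theta_{\mu(x)}(yz)=\theta_x(yz)$. For \eqref{p_two} I substitute indices, use \cref{prop_sidemp_semigruppo}-1.\ in the form $\theta_{\theta_x(y)}=\theta_y$ to rewrite $\mu\theta_x(y)=\theta_1\theta_x(y)=\theta_1(y)$, and then apply \eqref{astast} twice to reach $\theta_y(z)$. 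Once \eqref{p_one} and \eqref{p_two} are in hand, \cref{rem_I1} gives \eqref{i_one}, and a direct application of \eqref{astastast} with $f=\mu(x)$ gives $\theta_{xy}\theta_x(y)=\theta_{\mu(x)\mu(y)}\theta_{\mu(x)}(y)=\theta_{\mu(x)}(y)=\theta_x(y)$, which is \eqref{i_two}; so $s$ is an idempotent solution.

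For the converse I start from an arbitrary idempotent solution $s$ on $M$ and harvest the structural facts already proved: \cref{e_m_z_m} gives that $\mu:=\theta_1$ is an idempotent monoid homomorphism into $\E(M)$, \cref{prop_Se}-2.(e) gives $x\in M\theta_1(x)$ (so $\mu(x)$ is a right identity for $x$, matching the hypothesis $\mu(x)=e_x$), and \cref{prop_sidemp_monoide}-2.\ gives $\theta_x=\theta_{\theta_1(x)}$, which is exactly the factorization $\theta_x=\theta_{\mu(x)}$ asserted in the statement. Then \eqref{p_one} immediately yields \eqref{ast}; \eqref{astast} follows from \eqref{p_two} together with the substitutions $\theta_e=\theta_{\theta_1(x)}=\theta_x$ and the identity $\theta_{yx}=\theta_{\theta_1(y)\theta_1(x)}=\theta_{fe}$; and \eqref{astastast} follows from \eqref{i_two} after the same index translations.

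The main obstacle I anticipate is bookkeeping the indices correctly in \eqref{ast} and \eqref{astast}, where the subscript $f$ is defined by a further application of $\mu$: one must repeatedly use that $\mu$ is an idempotent homomorphism so that expressions like $\mu(\mu(x)y)$ and $\mu(x)\mu(y)$ reduce to the intended idempotents, and one must keep straight the asymmetry between $\theta_e\theta_{ef}$ and $\theta_{ef}\theta_e$ in \eqref{astast} versus \eqref{astastast}. The centrality hypothesis $\E(M)\subseteq\Z(M)$ is what makes these reductions legitimate (for instance commuting $\theta_1(x)$ past $y$ inside $\theta_1$), so I would flag each point where centrality is invoked. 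No genuinely hard estimate is involved; the difficulty is purely in matching each abstract hypothesis to the corresponding solution axiom without an index slip.
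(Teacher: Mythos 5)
Your proposal follows the same route as the paper in both directions: verify \eqref{p_one} from \eqref{ast}, \eqref{p_two} from \eqref{astast}, discharge \eqref{i_one} via \cref{rem_I1} using $x=x\mu(x)$, get \eqref{i_two} from \eqref{astastast}, and for the converse harvest \cref{e_m_z_m}, \cref{prop_Se}-$2.$(e) and \cref{prop_sidemp_monoide}-$2.$ before translating \eqref{p_one}, \eqref{p_two}, \eqref{i_two} into \eqref{ast}, \eqref{astast}, \eqref{astastast}. That converse half is fine.

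There is, however, one circular step in your forward direction. To prove \eqref{p_two} you propose to rewrite $\theta_1\theta_x(y)=\theta_1(y)$ by invoking \cref{prop_sidemp_semigruppo}-$1.$ (i.e.\ $\theta_{\theta_x(y)}=\theta_y$). But that proposition is a property of maps already known to be idempotent solutions; in the construction direction you are still in the middle of proving that $s$ is a solution at all, so you cannot appeal to it. The identity you need is instead an instance of the hypothesis \eqref{astast}: taking $e=1$ and $f=\mu(x)$ gives $\theta_1=\theta_1\theta_{1\cdot\mu(x)}=\theta_1\theta_x$, hence $\theta_1\theta_x(y)=\theta_1(y)$, which is exactly how the paper justifies that line. (Note also that \cref{prop_sidemp_semigruppo}-$1.$ is an equality of the maps $\theta_{\theta_x(y)}$ and $\theta_y$, not of the elements $\theta_1\theta_x(y)$ and $\theta_1(y)$, so even as a citation it is the wrong shape.) With that justification replaced, the rest of your argument matches the paper's proof step for step.
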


\medskip

\begin{rem}
    Unlike solutions defined on groups, in the case of idempotent solutions on monoids, it is not possible to find a way to write the maps $\theta_x$ by means of the map $\theta_1$ as in \cref{teo_gruppi}. Indeed, if we look at the idempotent solutions on the monoid $M$ in \cref{ex5}, one can see that there are three different solutions having the same map $\theta_1:M \to M$ given by $\theta_1(1)=\theta_a(1)=1$ and $\theta_1(b)=b$.
\end{rem}

\medskip

The next result narrows down that choice of the maps $\theta_e$ in \cref{theor_solu_monoid}. Indeed, given an idempotent solution on a monoid $M$ such that $\theta_1$ is a monoid homomorphism from $M$ to $\E(M)$, one has that the kernel of $\theta_1$, i.e., the set
\begin{align*}
    \ker \theta_1= \{(x,y) \in M \times M \, \mid \, \theta_1(x)=\theta_1(y)\}
\end{align*}
is a congruence relation on $M$. Thus, one can naturally consider the quotient monoid $M/\ker \theta_1$ (see \cite{Rhod89} for more details). Additionally, we have the following properties.

\begin{theor}\label{theor_kertheta1}
 Let $M$ be a monoid and $s(x,y)=(xy, \theta_x(y))$ an idempotent solution on $M$ such that $\theta_1$ is a monoid homomorphism from $M$ to $\E(M)$.  Then, 
 \begin{enumerate}
 \item $\theta_1(M)$ is a system of representatives of $M/\ker \theta_1$ that contains the identity;
     \item $(\theta_x(y) ,y) \in \ker \theta_1 $, for all $x, y \in M$.
 \end{enumerate}
\begin{proof}
The first part is a consequence of the idempotence of the map $\theta_1$ by $4.$ in \cref{prop_sidemp_monoide}. In fact,  \cref{prop_sidemp_monoide}-$1.$, we have that $1=\theta_1(1) \in \theta_1(M)$. Moreover, since by \cref{prop_sidemp_monoide}-$4.$ $\theta_1$ is an idempotent map, we easily obtain that $(\theta_1(x),x) \in \ker \theta_1 $, for every $x \in M$. Besides, if $x,y \in M$ are distinct and such that $(\theta_1(y) ,x) \in \ker \theta_1$, then we get $\theta_1(y)=\theta_1(\theta_1(y))=\theta_1(x)$.\\
The second part follows by \cref{cor_eSe}-$3.$, since  $\theta_1\theta_x(y)=\theta_1(y)$, for all $x, y \in M$, and so $(\theta_x(y) ,y) \in \ker \theta_1$. Therefore, we get the claim.
\end{proof}
\end{theor}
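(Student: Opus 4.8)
The plan is to establish both parts of \cref{theor_kertheta1} by leaning entirely on the properties already collected for idempotent solutions on monoids, particularly the idempotence of $\theta_1$ from \cref{prop_sidemp_monoide}-$4.$ and the ideal-theoretic statements of \cref{cor_eSe}. Since the hypothesis that $\theta_1$ is a monoid homomorphism to $\E(M)$ is in force, $\ker\theta_1$ is automatically a congruence (the kernel congruence of a homomorphism), so the quotient $M/\ker\theta_1$ makes sense and I only need to identify a transversal and locate the pairs $(\theta_x(y),y)$ within the congruence classes.

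For part $1.$, I would first record that $1=\theta_1(1)\in\theta_1(M)$ using \cref{prop_sidemp_monoide}-$1.$, so the identity lies in the proposed system of representatives. The substance is to check that $\theta_1(M)$ meets each $\ker\theta_1$-class exactly once. That $\theta_1(M)$ meets every class is immediate: for any $x\in M$, idempotence of $\theta_1$ gives $\theta_1(\theta_1(x))=\theta_1(x)$, hence $(\theta_1(x),x)\in\ker\theta_1$, so the class of $x$ contains the element $\theta_1(x)\in\theta_1(M)$. For uniqueness, I would take two images $\theta_1(y)$ and $\theta_1(x)$ lying in the same class, i.e.\ $(\theta_1(y),\theta_1(x))\in\ker\theta_1$ (it suffices to phrase it as $(\theta_1(y),x)\in\ker\theta_1$ with $x\in\theta_1(M)$), and apply $\theta_1$ to both coordinates: idempotence collapses $\theta_1(\theta_1(y))=\theta_1(y)$ and $\theta_1(\theta_1(x))=\theta_1(x)$, forcing $\theta_1(y)=\theta_1(x)$. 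Thus two representatives in the same class coincide, giving a genuine transversal.

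For part $2.$, the key fact is $\theta_1\theta_x(y)=\theta_1(y)$ for all $x,y\in M$, which I would obtain directly from \cref{cor_eSe}-$3.$ (namely $\theta_e=\theta_e\theta_x$ specialized to $e=1$, reading off $\theta_1=\theta_1\theta_x$ applied to $y$). By the very definition of the kernel congruence, $\theta_1(\theta_x(y))=\theta_1(y)$ is exactly the assertion $(\theta_x(y),y)\in\ker\theta_1$, which is the claim.

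I do not expect a genuine obstacle here: every ingredient is already proved earlier, and the argument is a short bookkeeping exercise in applying idempotence of $\theta_1$ and the identity $\theta_1=\theta_1\theta_x$. The only point requiring slight care is the uniqueness direction of part $1.$ — making sure that "same class" is correctly translated into an equation between the two representatives via idempotence, rather than merely asserting that distinct representatives give distinct images. Once that translation is made explicit, the proof is complete.
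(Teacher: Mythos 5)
Your proposal is correct and follows essentially the same route as the paper: part 1 via $\theta_1(1)=1$ and the idempotence of $\theta_1$ from \cref{prop_sidemp_monoide}, and part 2 via $\theta_1=\theta_1\theta_x$ from \cref{cor_eSe}-$3.$ specialized to $e=1$. The uniqueness step you flag as needing care is handled exactly as in the paper, by applying $\theta_1$ and collapsing with idempotence.
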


\vspace{3mm}

\begin{cor}
    Let $M$ be a monoid for which $\E(M) \subseteq \Z(M)$ and $s(x, y)=(xy, \theta_x(y))$ an idempotent solution on $M$. Then, $\theta_1(M)$ is a system of representatives of $M/\ker \theta_1$ that contains the identity and $\left(\theta_x(y), y\right) \in \ker \theta_1$.
    \begin{proof}
       It is a consequence of \cref{theor_kertheta1}, since, by  \cref{e_m_z_m}, the map $\theta_1$ is a monoid homomorphism from $M$ to $\E(M)$.
    \end{proof}
\end{cor}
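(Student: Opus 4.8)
The plan is to recognize this statement as an immediate specialization of \cref{theor_kertheta1}, so that essentially no new computation is required; all the substantive work has already been done in the preceding results. Indeed, the conclusion to be established---that $\theta_1(M)$ is a system of representatives of $M/\ker\theta_1$ containing the identity, and that $(\theta_x(y),y)\in\ker\theta_1$ for all $x,y\in M$---is verbatim the conclusion of \cref{theor_kertheta1}. Hence it suffices to check that the hypotheses of that theorem hold in the present setting.

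The single gap to bridge is that \cref{theor_kertheta1} requires $\theta_1$ to be a monoid homomorphism from $M$ to $\E(M)$, whereas here I assume only that $M$ has central idempotents, i.e.\ $\E(M)\subseteq\Z(M)$, together with idempotence of $s$. The decisive step is therefore to invoke \cref{e_m_z_m}: under exactly these hypotheses, $\theta_1$ is shown to be an idempotent monoid homomorphism from $M$ to $\E(M)$. This is precisely the property needed, and once it is secured the relation $\ker\theta_1$ is a congruence (as recorded in the paragraph preceding \cref{theor_kertheta1}), so that the quotient $M/\ker\theta_1$ is meaningful and the statement is well posed.

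With the homomorphism property in hand, I would simply apply \cref{theor_kertheta1} to the solution $s$: its first part gives that $\theta_1(M)$ is a system of representatives of $M/\ker\theta_1$ containing $1$, and its second part gives $(\theta_x(y),y)\in\ker\theta_1$ for all $x,y\in M$, which is exactly the claim. There is no genuine obstacle at this level; the only point meriting care is to confirm that the centrality assumption $\E(M)\subseteq\Z(M)$ really does deliver the homomorphism hypothesis of \cref{theor_kertheta1}. That confirmation is precisely the content of \cref{e_m_z_m}, whose proof in turn rests on \cref{cor_eSe}-$1.$ and \cref{prop_sidemp_semigruppo}-$1.$ in combination with the pentagon relations \eqref{p_one} and \eqref{i_one}; so the burden of the argument is entirely absorbed by that earlier proposition.
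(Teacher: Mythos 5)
Your proposal is correct and follows exactly the paper's own argument: apply \cref{e_m_z_m} to obtain that $\theta_1$ is a monoid homomorphism from $M$ to $\E(M)$, then invoke \cref{theor_kertheta1}. Nothing is missing.
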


\medskip

\begin{rem}
Let $M$ be a monoid for which $\E(M) \subseteq \Z(M)$ and $s(x,y)=(xy, \theta_x(y))$ and $r(x, y)=\left(xy, \eta_x(y)\right)$ two idempotent solutions on $M$. Then, by \eqref{isomor},  if such solutions are isomorphic, there exists an isomorphism $f$ of $M$ such that $f\theta_1=\eta_1 f$, i.e., $f$ sends the system of representatives $\theta_1(M)$ of $M/\ker \theta_1$ into the other one $\eta_1f(M)$.
    
\end{rem}

\bigskip 
\bibliography{bibliography}

\end{document}